\documentclass[11pt, a4paper, reqno]{article}
\usepackage[utf8]{inputenc}
\usepackage[T1]{fontenc}
\usepackage{amsmath}
\usepackage{mathtools}
\usepackage{amsthm}
\usepackage{stmaryrd}
\usepackage[normalem]{ulem}
\IfFileExists{dsfont.sty}{\usepackage{dsfont}}{\providecommand{\mathds}[1]{\mathbf{##1}}}
\IfFileExists{newtxtext.sty}{%
  \usepackage{newtxtext}
  \usepackage[cmintegrals,cmbraces]{newtxmath}
  \usepackage[cal=boondoxo]{mathalfa}
}{%
}
\usepackage[a4paper, margin=1.2in, top=1.2in, bottom=1.3in]{geometry}
\IfFileExists{mleftright.sty}{\usepackage{mleftright}\mleftright}{}
\allowdisplaybreaks
\usepackage{enumitem}
\setlist{itemsep=0.3em, topsep=0.4em, parsep=0.1em}
\setlist[enumerate]{label=\normalfont(\roman*)}
\usepackage{tikz}
\usetikzlibrary{calc, arrows.meta}
\usepackage{xcolor}
\usepackage[font=small, labelfont=sc, labelsep=period]{caption}
\usepackage{titlesec}
\titleformat{\section}[block]{\large\bfseries\centering}{\thesection.}{0.8em}{}
\titleformat{\subsection}[hang]{\normalsize\bfseries}{\thesubsection.}{0.8em}{}
\definecolor{marine}{RGB}{0, 32, 128}
\definecolor{crimson}{RGB}{220, 20, 60}
\definecolor{graybeam}{gray}{0.75}
\usepackage[colorlinks=true, linkcolor=marine, citecolor=marine, urlcolor=marine]{hyperref}

\DeclareMathOperator{\rank}{rank}
\newcommand{\R}{\mathbb R}
\newcommand{\X}{\R^{m\times d}}
\newcommand{\Ob}{\overline{\Omega}}
\newcommand{\Meas}{\mathcal M}
\newcommand{\Measp}{\mathcal M^{+}}
\newcommand{\Prob}{\mathrm{Prob}}
\newcommand{\Leb}{\mathcal L}
\newcommand{\Haus}{\mathcal H}
\newcommand{\restr}{\mathbin{\vrule height 1.6ex depth 0pt width 0.13ex\vrule height 0.13ex depth 0pt width 1.3ex}}
\newcommand{\YM}{\mathbf Y}
\newcommand{\bnu}{\boldsymbol\nu}
\newcommand{\toY}{\stackrel{\YM}{\to}}
\newcommand{\toweakstar}{\stackrel{*}{\rightharpoonup}}
\newcommand{\bTheta}{\boldsymbol\Theta}
\newcommand{\Div}{\operatorname{div}}
\newcommand{\vol}{\mathcal L^d}
\newcommand{\fl}[1]{\lVert #1\rVert_{\mathrm{KR}}^\Omega}
\newcommand{\la}{\langle}
\newcommand{\ra}{\rangle}
\newcommand{\Lip}{\mathrm{Lip}}
\newcommand{\spt}{\operatorname{spt}}
\newcommand{\Span}{\operatorname{span}}
\newcommand{\dist}{\operatorname{dist}}

\numberwithin{equation}{section}
\newtheoremstyle{eleganttheorem}
  {0.8em plus 0.2em minus 0.2em}{0.8em plus 0.2em minus 0.2em}
  {\itshape}{}{\bfseries}{.}{0.5em}{}
\newtheoremstyle{elegantremark}
  {0.8em plus 0.2em minus 0.2em}{0.8em plus 0.2em minus 0.2em}
  {\normalfont}{}{\itshape}{.}{0.5em}{}
\theoremstyle{eleganttheorem}
\newtheorem{theorem}{Theorem}[section]
\newtheorem{proposition}[theorem]{Proposition}
\newtheorem{lemma}[theorem]{Lemma}
\newtheorem{corollary}[theorem]{Corollary}
\newtheorem{definition}[theorem]{Definition}
\theoremstyle{elegantremark}
\newtheorem{remark}[theorem]{Remark}

\title{{\textbf{\Large Divergence-free concentrations come from vanishing sequences}}}
\author{{Adolfo Arroyo-Rabasa$^{\,\dagger}$ \qquad Francesco Nobili$^{\,*}$ \qquad Ivan Yuri Violo$^{\,\dagger}$}\\[0.6em]
  {\normalsize\itshape $^{\dagger}$Dipartimento di Matematica, Università di Pisa}\\
  {\normalsize\itshape $^{*}$Dipartimento di
Matematica e Applicazioni, Università di Napoli Federico II}}
\date{}

\begin{document}
\maketitle

\begin{abstract}
\noindent
A vanishing sequence $V_n$ of divergence-free matrix fields is one that is bounded in $L^1$ and is carried
by open sets $A_n$ of vanishing volume. Recently established,  Bouchitté's vanishing mass conjecture says that the directions such a sequence can carry are rigidly constrained: their limiting
distribution must be a superposition of microstructures whose barycenters are singular matrices. We
prove the converse in the non-symmetric setting: every such superposition is attained by a vanishing sequence. In fact, we are able to construct divergence-free fields $V_n$ \emph{supported} on $A_n$, whose relative boundary is a smooth compact manifold.
\begin{flushleft}
	{\bf MSC 2020.\,} 49J45 (primary); 28A33, 35E20, 49Q15, 49Q22 (secondary) 
    
    	\smallskip
    {\bf Keywords.\,} vanishing mass conjecture, mass concentration, generalized Young measures, divergence-free fields, Kantorovich--Rubinstein norm, Arens--Eells,  filling by currents.
\end{flushleft}
\end{abstract}

%=====================================================================
\section{Introduction}\label{sec:intro}
%=====================================================================

Let $m \ge d\ge2$ be integers. 
We fix $\Omega\subset\R^d$ a bounded connected open set, and consider matrix
fields $V = (v^1,\dots,v^m) \in L^1(\Omega;\X)$ subject to the row-wise divergence constraint
\[
    \Div V \ = \ \bigl(\Div v^1,\dots,\Div v^m\bigr) \ = \ 0
    \qquad\text{in }\mathcal D'(\Omega;\R^m) .
\]
We call any such $V$ a divergence-free field. We write $|\cdot|$ for the Euclidean norm on $\X$ and $B$ for its corresponding open unit ball. We also write $\partial_\Omega A\coloneqq \Omega\cap \partial A$ for the topological boundary of a set $A\subset \Omega$ \emph{relative} to $\Omega$.

\begin{definition}[Vanishing sequence]\label{def:vanishing}
A sequence $(V_n)_n\subset L^1(\Omega;\X)$ is \emph{vanishing} if there are open sets
$A_n\subset\Omega$ with
\begin{equation}\label{eq:vanishing}
    \|V_n\|_{L^1(\Omega)} = 1, \qquad  \int_{\Omega \setminus A_n} |V_n| \, dx = 0, \qquad \mathcal L^d(A_n)\to0 .
\end{equation}
We call the sequence \emph{regular vanishing} if moreover 
\[
\text{
$\partial \Omega \subset \partial A_n$, \qquad $\partial_\Omega A_n$ \ is a compact smooth manifold, \qquad $\spt V_n \subset A_n$.
}
\]
\end{definition}
Such a sequence carries no limit in the usual sense: it converges to zero in measure while keeping
some mass. Following \cite{Alberti21,GennaioliRindler26}, the directions it does carry are recorded by the
push-forward
\begin{equation}\label{eq:theta}
    \Theta_V \ \coloneqq\ \Bigl(\tfrac{V}{|V|}\Bigr)_{\#}\bigl(\mathds 1_{\{|V|>0\}}|V|\,\Leb^d\bigr)
    \ \in\ \Measp(\partial B) ,
\end{equation}
so that $\Theta_V(E)$ is the amount of $L^1$-mass that $V$ carries in the directions
$E\subset\partial B$, and $\Theta_V(\partial B) = \|V\|_{L^1(\Omega)}$. Since our task is to construct sequences, we shall also keep track of \emph{where} the directions occur, through the localized version
\begin{equation}\label{eq:btheta}
    \bTheta_V \ \coloneqq\ \Bigl(\mathrm{id},\tfrac{V}{|V|}\Bigr)_{\#}\bigl(\mathds 1_{\{|V|>0\}}|V|\,\Leb^d\bigr)
    \ \in\ \Measp(\Ob\times\partial B) ,
\end{equation}
whose first marginal is $|V|\Leb^d$ and whose second marginal is precisely $\Theta_V$. Along a vanishing sequence
the total masses are bounded, so, up to a subsequence,
\begin{equation}\label{eq:limitpair}
\begin{gathered}
    \bTheta_{V_n}\ \toweakstar\ \lambda\otimes\mu_x \quad\text{in }\Meas(\Ob\times\partial B),
    \qquad\text{and hence}\\[0.3em]
    \Theta_{V_n}\ \toweakstar\ \mu \ \coloneqq\ \int_{\Ob}\mu_x\,d\lambda(x) \ \in\ \Prob(\partial B)
    \qquad\text{in }\Meas(\partial B) ,
\end{gathered}
\end{equation}
where $\lambda\in\Prob(\Ob)$ is the weak-$*$ limit of $|V_n|\Leb^d$ and $x\mapsto\mu_x\in\Prob(\partial B)$ is the $\lambda$-measurable family obtained by disintegrating the limit of $\bTheta_{V_n}$ with respect to its first marginal. We call the pair $(\lambda,\mu_x)$ the \emph{limiting concentration} of the sequence, and $\mu$ its \emph{averaged concentration}: the former records where, and along which directions, mass concentrates; the latter records the directions along
which mass concentrates, averaged over $\Omega$. Write
\begin{equation}\label{eq:cone}
\begin{aligned}
   \Lambda = \Lambda_{m\times d} &\coloneqq \{ M \in \X : \rank M < d \} \\
            & \ = \ \bigl\{ M \in \X : \det(M_I) = 0 \ \text{ for every } d \times d \text{ minor } M_I \bigr\}
\end{aligned}
\end{equation}
for the cone of \emph{rank-deficient} matrices, so that $\Lambda_{d\times d}$ is the cone of
singular $d\times d$ matrices.  The cone $\Lambda$  is closed and symmetric and spans $\X$, since it contains every rank-one matrix. In its original form (see Sect.~\ref{sec:symmetric}), Bouchitté's
vanishing mass conjecture \cite{Bouchitte01}, which originates in the optimal design of light
structures \cite{Michell04,BouchitteGangboSeppecher08,BabadjianIurlanoRindler23,Gangbo18}, concerns
the concentrations generated by divergence-free vanishing sequences. Gennaioli and Rindler have
recently established it, in the stronger form in which the sequence is only required to converge to
zero in measure:

\begin{theorem}[Gennaioli--Rindler \cite{GennaioliRindler26}]\label{thm:VMC}
Let $(V_n)_n\subset L^1(\Omega;\X)$ be uniformly bounded, divergence-free and converging to zero in
measure, with averaged concentration $\mu\in\Prob(\partial B)$. Then there are a probability measure
$\pi\in\Prob(\Lambda)$ and a weakly Borel family $\{\nu_X\}_{X\in\Lambda}\subset\Prob(\partial B)$ (defined
for $\pi$ a.e. $X$) with
\begin{equation}\label{eq:superposition}
    \mu \ = \ \int_{\Lambda}\nu_X\,d\pi(X)
    \qquad\text{and}\qquad
    \bar\nu_X \ = \ X \quad \pi\text{-a.e.,}
\end{equation}
where $\bar\nu$ denotes the barycenter of $\nu \in \Prob(\partial B) $.
\end{theorem}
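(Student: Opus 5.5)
Theorem~\ref{thm:VMC} is the Gennaioli--Rindler theorem quoted from \cite{GennaioliRindler26}, and the paper takes it as given. If I had to prove it, I would follow the generalized Young measure route and extract the rigidity from the divergence constraint at the level of tangent measures.

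\textbf{Step 1: compactness and the blow-up limit.} Since $(V_n)_n$ is bounded in $L^1$ and converges to zero in measure, a subsequence generates a generalized Young measure $\bnu$ with trivial oscillation part. All of its mass therefore lives in the concentration part, with concentration measure $\lambda$ and directional measures $\mu_x\in\Prob(\partial B)$, matching \eqref{eq:limitpair}. I would then blow up at $\lambda$-a.e.\ point $x_0$ along scales $r_k\downarrow0$. A diagonal argument produces a new divergence-free sequence $W_k$ on the unit cube with the following properties:
\begin{itemize}
\item $W_k$ is still bounded in $L^1$ and still converges to zero in measure;
\item its concentration is a tangent measure $\tau$ of $\lambda$ at $x_0$, carrying the direction $\mu_{x_0}$;
\item its weak-$*$ limit is $\tau\,\bar\mu_{x_0}$ (after a first blow-up) and satisfies $\Div(\tau\,\bar\mu_{x_0})=0$.
\end{itemize}

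\textbf{Step 2: rigidity through the divergence constraint.} The divergence-free condition on $\tau X$ with $X$ constant forces $\tau$ to be invariant along the row space of $X$. If $\rank X=d$, then $\tau$ is a multiple of Lebesgue measure. This is incompatible with a limit of sequences supported on sets of vanishing volume once one uses a De Philippis--Rindler type argument: $\mathcal A$-free measures are absolutely continuous where the polar lies outside the wave cone, here $\Lambda$. This yields $\bar\mu_{x_0}\in\Lambda$ only in an averaged sense, which is not enough.

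\textbf{Step 3: splitting into pieces with singular barycenters.} The genuine content of \eqref{eq:superposition} is that $\mu$ decomposes into probability measures $\nu_X$, each with singular barycenter. My plan is to localize further in direction space, testing with $\varphi(V/|V|)$ for $\varphi\in C(\partial B)$. Multiplying a divergence-free field by $\varphi(V/|V|)$ does not preserve the constraint, so instead I would use iterated tangent measures, or slicing of the limiting normal currents $V_n\Leb^d$ viewed as $m$-tuples of $1$-currents. Smirnov's decomposition into curves writes each row as a superposition of elementary solenoids. The joint structure of the $m$ rows along the concentration set then gives matrices of rank below $d$, because the limiting rectifiable structure has dimension below $d$. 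This yields $\pi$ and the family $\nu_X$.

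\textbf{Main obstacle.} The hardest step is Step 3, namely producing a superposition rather than the single constraint $\bar\mu\in\Lambda$. Convex combinations of singular matrices already span everything, so the decomposition must retain fine information. I would first try to measurably select, via disintegration over a Young-measure-in-Young-measure (second-order blow-up) construction, the cells on which the local barycenter is forced into $\Lambda$ by Step 2.
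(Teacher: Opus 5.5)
The paper does not prove Theorem~\ref{thm:VMC}; it imports it from Gennaioli--Rindler, so there is no internal proof to compare against. Judged on its own terms, your proposal has a genuine gap, and Step~2 rests on a false claim. A full-rank barycenter with Lebesgue tangent measure is perfectly compatible with vanishing supports. Take $\Omega=(0,1)^2$ and $m=d=2$. Let $S_n$ (resp.\ $T_n$) be a union of $N_n$ equally spaced horizontal (resp.\ vertical) strips of width $\delta_n$, with $N_n\to\infty$ and $N_n\delta_n\to0$. Set $V_n=c_n\,\mathrm{diag}(\mathds 1_{S_n},\mathds 1_{T_n})$, with $c_n$ normalizing the $L^1$ norm. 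The first row depends only on $x_2$ and points along $e_1$; the second depends only on $x_1$ and points along $e_2$; hence $\Div V_n=0$. Moreover $\vol(\{V_n\neq0\})\le 2N_n\delta_n\to0$, and $\bTheta_{V_n}\toweakstar\Leb^2\restr\Omega\otimes\frac12(\delta_{e_1\otimes e_1}+\delta_{e_2\otimes e_2})$, whose barycenter $\frac12 I$ is invertible at every point.

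The De Philippis--Rindler theorem constrains only the polar of the \emph{singular} part of the weak-$*$ limit $\bar\mu_x\lambda$, and the thinness of $\{V_n\neq0\}$ is invisible in weak limits. Where that theorem does apply, it actually gives you everything. At $\lambda^s$-a.e.\ $x_0\in\Omega$ it yields $\bar\mu_{x_0}\in\Lambda$, and then $\mu_{x_0}$ is trivially a $\Lambda$-superposition: take $\pi=\delta_{\bar\mu_{x_0}}$ and $\nu_{\bar\mu_{x_0}}=\mu_{x_0}$. So pointwise membership of the barycenter in $\Lambda$ would be more than enough, not too little. The whole difficulty therefore sits at points where $\lambda^{ac}>0$, and there the weak limit imposes no pointwise constraint on $\bar\mu_{x_0}$. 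Your blow-up in Step~1 only reduces matters to this homogeneous situation. That is exactly how the paper uses it in Lemma~\ref{lem:localization}, where Theorem~\ref{thm:VMC} is then invoked at the blown-up point.

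Step~3 is a list of tools rather than an argument, and its guiding heuristic is wrong on three counts. First, rank deficiency is not a property of the directions themselves. By Corollary~\ref{cor:homogeneous}, $\mu=\frac12(\delta_M+\delta_{-M})$ with $M\in\partial B$ of rank $d$ is attained by a vanishing sequence, since $\bar\mu=0\in\Lambda$. Second, there is no lower-dimensional limiting structure at points where $\lambda^{ac}>0$. Third, a row-by-row Smirnov decomposition discards precisely the coupling between rows that the rank condition is about.

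The missing idea is to transfer the constraint to nonlinear functions of $V_n/|V_n|$ through quasiconvexity. This is the standard cure for the obstruction you yourself noticed with $\varphi(V/|V|)$. By the necessity part of the characterization of $\Div$-free Young measures, at $\Leb^d$-a.e.\ $x_0$ one has $h(s\bar\mu_{x_0})\le h(0)+s\la\mu_{x_0},h^{\#}\ra$ with $s=\lambda^{ac}(x_0)$, for every $\Div$-quasiconvex $h$ of linear growth. One must then prove the converse of Lemma~\ref{lem:jensen}: these inequalities force $\mu_{x_0}$ into the weak-$*$ compact convex set of $\Lambda$-superpositions. Equivalently, for every $\mu$ outside that set one has to exhibit a $\Div$-quasiconvex integrand of linear growth that violates the inequality. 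After that one integrates in $\lambda$, with separate care for any mass of $\lambda$ on $\partial\Omega$. By the characterization used in Corollary~\ref{cor:approx}, this converse is essentially equivalent to the theorem itself. It is where the real content lies, and your proposal does not address it.
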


Call a measure $\mu\in\Prob(\partial B)$ satisfying \eqref{eq:superposition} a
\emph{$\Lambda$-superposition}: an average of \emph{microstructures} $\nu_X$, each of which has a
rank-deficient barycenter. Theorem~\ref{thm:VMC} says that nothing but a $\Lambda$-superposition can
occur, on average. The statement localizes as follows.

\begin{remark}[Localization]\label{rem:localization}
Let $(V_n)_n$ be as in Theorem~\ref{thm:VMC}, with limiting concentration $(\lambda,\mu_x)$. Then:
\begin{enumerate}
    \item[(i)] $\Div(\bar \mu_x\,\lambda(dx)) = 0$ in $\mathcal D'(\Omega;\R^m)$,
    \item[(ii)] $\mu_x$ is a $\Lambda$-superposition for $\lambda$-a.e.\ $x\in\Omega$.
\end{enumerate}
Condition (i) is elementary:
\[
    \int_\Omega\varphi\,V_n\,dx=\int\varphi(x)\,z\,d\bTheta_{V_n}(x,z)\to\int_{\Ob}\varphi\,\bar \mu_x\,d\lambda,\qquad \text{for all }\varphi\in C(\Ob),
    \]
    and the constraint $\Div V_n=0$ passes to the limit. Condition (ii) is the localized form of Theorem~\ref{thm:VMC}: it follows from that theorem and from the localization principles for divergence-free Young measures of \cite{ARDPR_Advances} (see Lemma~\ref{lem:localization}).
\end{remark}

Conditions (i)--(ii) are thus necessary for the limiting concentration of a divergence-free sequence converging to zero in measure. The converse of Theorem~\ref{thm:VMC}, which says that every $\Lambda$-superposition can be attained by a divergence-free sequence converging to zero in measure, follows from the convexity results in \cite{KirchheimKristensen16} and the characterization of $\mathcal A$-free Young measures from \cite{ArroyoRabasa20}. Our result shows that, in addition, the recovery sequence can be replaced by a regular vanishing one, and that the same holds in the localized form: conditions (i)--(ii) are also sufficient.

\begin{theorem}\label{thm:main}
Let $\Omega\subset\R^d$ be a bounded connected open set, let $\lambda\in\Prob(\Ob)$ satisfy $\lambda(\partial\Omega)=0$, and let $x\mapsto\mu_x\in\Prob(\partial B)$ be a (weakly) $\lambda$-measurable family satisfying (i) and (ii) of Remark~\ref{rem:localization}. Then there is a regular vanishing sequence $(V_n)_n\subset L^1(\Omega;\X)$ of
divergence-free fields with limiting concentration $(\lambda,\mu_x)$:
\[
    \bTheta_{V_n}\ \toweakstar\ \lambda\otimes\mu_x
    \qquad\text{in }\Meas(\Ob\times\partial B) .
\]
In particular $\Theta_{V_n}\toweakstar\mu=\int_{\Ob}\mu_x\,d\lambda$ in $\Meas(\partial B)$.
\end{theorem}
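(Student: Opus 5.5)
The plan is to construct $V_n$ in three layers: a local building block for a single microstructure, a gluing of such blocks over a fine partition of $\Omega$, and a global correction that repairs the divergence. The correction is where condition (i) and the Kantorovich--Rubinstein (flat) norm $\fl{\cdot}$ come in.

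\emph{Step 1: building blocks for one microstructure.} Fix $\nu\in\Prob(\partial B)$ with $\bar\nu = X\in\Lambda$. I would first treat atomic $\nu=\sum_i t_i\delta_{Z_i}$, since atomic measures with barycenter in $\Lambda$ are weak-$*$ dense among those with barycenter in $\Lambda$. Since $\rank X<d$, there is a unit vector $e\in\ker X$, so the constant field $X$ has zero flux across hyperplanes orthogonal to $e$. The block will be a thin slab $S_\varepsilon=\{|x\cdot e|<\varepsilon\}\cap Q$ in a small cube $Q$. Inside the slab we take a laminate-type divergence-free field whose direction distribution is approximately $\nu$ and whose average is $X$. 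Because $Xe=0$, the normal traces on the two faces of the slab vanish, so in that direction the field can be cut off at no cost. In the other directions the cut-off at $\partial Q$ produces boundary flux, which we fix with a corrector carrying $o(1)$ mass. This uses the standard fact that a divergence on a set of small diameter with small flat norm admits an $L^1$-small divergence-free solution (a filling by currents). The outcome is a divergence-free field supported in an open set of volume $O(\varepsilon)$ times the volume of $Q$. I would smooth it by mollifying the indicator and slightly enlarging the support to a set with smooth boundary.

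\emph{Step 2: localization.} Partition $\Omega$ into dyadic cubes $Q_j$ of side $h$ and discard the cubes near $\partial\Omega$, which is allowed because $\lambda(\partial\Omega)=0$. By (ii), each $\mu_x$ is a $\Lambda$-superposition. Using measurable selection, approximate $\lambda\restr Q_j\otimes\mu_x$ by finitely many pieces $\lambda(Q_j)\,t_{jk}\,\nu_{jk}$ with $\bar\nu_{jk}\in\Lambda$. Then place disjoint thin blocks from Step 1 inside $Q_j$, scaled so that their masses equal $\lambda(Q_j)t_{jk}$. The resulting $W_{n}$ is supported on a set of vanishing volume, and $\bTheta_{W_n}$ converges to $\lambda\otimes\mu_x$, since its directions are correct cube by cube.

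\emph{Step 3: global divergence correction (the main obstacle).} I expect this step to be the hardest. Each block is individually divergence-free only after the local corrections in Step 1. These corrections fix the fluctuation part, but the block still carries the net mean $\lambda(Q_j)\bar\mu_{x_j}$ concentrated in $Q_j$, and this mean has no reason to be divergence-free from cube to cube. What I would prove is that the distribution $\Div W_n$ has flat norm tending to zero. Pairing with a $1$-Lipschitz test field $\psi$ vanishing near $\partial\Omega$, we compare $\int\nabla\psi : W_n$ with $\int\nabla\psi:\bar\mu_x\,d\lambda$. The latter is zero by (i), and the difference is $O(h)+o(1)$. We then fill $\Div W_n$ with a field $R_n$ satisfying $\|R_n\|_{L^1}\to0$ and $\Div R_n=-\Div W_n$, supported in a neighbourhood of vanishing volume, for instance a smoothed tubular neighbourhood of a polyhedral $1$-current that realizes the flat-norm infimum. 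Here Arens--Eells duality gives the equality between the Kantorovich--Rubinstein norm and the minimal filling mass. The delicate points are controlling the boundary behaviour of this filling and keeping its support thin while making the total support a set $A_n$ with smooth compact relative boundary and $\partial\Omega\subset\partial A_n$; I would handle the latter by taking $A_n$ to be the union of the smooth tubes together with a thin collar of $\partial\Omega$. Finally, set $V_n=(W_n+R_n)/\|W_n+R_n\|_{L^1}$, and a diagonal argument in $(h,\varepsilon,\text{atomic approximation})$ gives the claimed convergence $\bTheta_{V_n}\toweakstar\lambda\otimes\mu_x$.
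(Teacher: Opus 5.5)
There is a genuine gap, and it sits in Step~1, which is where the whole difficulty of the theorem lies. You assert that a slab can host a ``laminate-type'' divergence-free field whose $|\cdot|$-weighted direction distribution approximates an arbitrary atomic $\nu=\sum_i t_i\delta_{Z_i}$ with $\bar\nu=X\in\Lambda$. But the only property of $\nu$ you use is $\ker X\neq\{0\}$, and this does not suffice.

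Laminates require $(A-B)n=0$ across every interface. With layers parallel to the slab faces this forces $Ae=Xe=0$ for every value $A$. With any other layering, the normal trace on the faces vanishes only on average, not pointwise, so the cut-off is not free. The atoms of a general $\Lambda$-superposition need not lie in $\Lambda$, nor be pairwise compatible.

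In fact no periodic bounded microstructure works in general. For $d=m=2$ the rows of a divergence-free field are rotated gradients, so $\det$ is $\Div$-quasiaffine. Hence every $Q$-periodic divergence-free $U\in L^2_{\mathrm{loc}}(\R^2;\R^{2\times2})$ satisfies $\int_Q\det U\,dx=|Q|\det\bigl(|Q|^{-1}\int_Q U\,dx\bigr)$. Take $Z=I/\sqrt2$. The measure $\nu=\tfrac12(\delta_Z+\delta_{-Z})$ is a $\Lambda$-superposition (with $\pi=\delta_0$). Yet a mean-zero such $U$ with values in a cone $\{\det A\ge c|A|^2\}$ around $\pm Z$ must vanish. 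Realizing $\nu$ therefore needs a part of small $L^1$ but non-negligible $L^2$ mass with negative determinant, i.e.\ a concentration effect inside the microstructure. That such objects exist for every $\Lambda$-superposition is, by duality, exactly what the Kirchheim--Kristensen convexity of $h^{\#}$ at points of $\Lambda$ encodes.

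The paper never builds microstructures. It proves the Jensen inequality \eqref{eq:jensen} (Lemma~\ref{lem:jensen}, via \eqref{eq:subadd} and Kirchheim--Kristensen). It then invokes the characterization of divergence-free generalized Young measures to obtain $C^1$ divergence-free $w_k\toY\bnu$ with $\bnu=(\delta_0,\lambda,\mu_x)$ (Corollary~\ref{cor:approx}). That is where (ii) genuinely enters, and your proposal has no substitute for it.

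Step~3 does not go through either, for two reasons.
\begin{enumerate}
\item It contradicts Step~1. A compactly supported divergence-free $L^1$ field has zero mean (test with the coordinate functions). So a block of mean $X\neq0$ cannot be closed by a corrector of mass $o(1)$; its lateral load has Kantorovich--Rubinstein norm comparable to $|X|$ times the block mass.
\item The bound $\fl{\Div W_n}=O(h)+o(1)$ does not follow from (i). The convergence $W_n\Leb^d\toweakstar\bar\mu_x\lambda$ controls $\int\nabla\psi:W_n$ only for continuous $\nabla\psi$, whereas the norm tests all $1$-Lipschitz $\psi$, whose gradients are merely bounded. Concretely, for diffuse $\lambda$ each face between adjacent cubes carries slab flux of total variation of order $h^{d-1}$. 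When the normals $e_j$ of neighbouring slabs differ by an angle of order one, as they will for merely measurable $\bar\mu_x$, this flux must be moved over distances of order $h$. Summing over the $\sim h^{-d}$ faces, the natural estimate is of order one, not $O(h)$.
\end{enumerate}

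The paper sidesteps all of this. The load it repairs is $f_n=-\Div E_n$, where $E_n$ is the part of an already divergence-free field discarded by truncation. So $\fl{f_n}\le\|E_n\|_{L^1}\to0$ by \eqref{eq:apriori} at no cost, and Lemma~\ref{lem:filling} fills it on a thin set.
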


\begin{corollary}\label{cor:homogeneous}
Let $\Omega\subset\R^d$ be a bounded connected open set and let $\mu\in\Prob(\partial B)$ be a $\Lambda$-superposition. Then there is a regular vanishing sequence $(V_n)_n\subset L^1(\Omega;\X)$ of
divergence-free fields with
\[
    \Theta_{V_n}\ \toweakstar\ \mu
    \qquad\text{in }\Meas(\partial B).
\]
\end{corollary}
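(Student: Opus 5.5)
The corollary is a special case of Theorem~\ref{thm:main}, so the plan is to produce a pair $(\lambda,\mu_x)$ that satisfies (i)--(ii) of Remark~\ref{rem:localization} and whose average is $\mu$. The simplest choice is a constant family, $\mu_x\equiv\mu$. The difficulty is then condition (i): it would read $\Div(\bar\mu\,\lambda)=0$. If we took $\lambda$ to be normalized Lebesgue measure on a ball, this would force $\bar\mu=0$, which is false in general. I would therefore pick $\lambda$ so that the constant matrix $\bar\mu$ is compatible with the divergence constraint.

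\textbf{Choice of $\lambda$.} By \eqref{eq:superposition}, $\bar\mu=\int_\Lambda X\,d\pi(X)$, and this matrix need not be rank-deficient. So I would not try to solve (i) with a constant family. Instead I would spread the concentration spatially according to $\pi$. Decompose $\pi$ into atoms, approximating by finitely many pieces if necessary. For each $X\in\Lambda$ there is a unit vector $e_X\in\ker X$, since $\rank X<d$. Take $\lambda_X=\Haus^1\restr\ell_X$ normalized, where $\ell_X\subset\Omega$ is a segment parallel to $e_X$. Then $X\lambda_X$ has divergence $X e_X$ times the difference of the Dirac masses at the endpoints, and this vanishes. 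Set $\mu_x=\nu_X$ on $\ell_X$. Then (i) holds, and (ii) holds because each $\nu_X$ is trivially a $\Lambda$-superposition, with $\pi=\delta_X$. Taking the segments disjoint, weighting them by $\pi$, and keeping them away from $\partial\Omega$ so that $\lambda(\partial\Omega)=0$, we get $\int\mu_x\,d\lambda=\int\nu_X\,d\pi=\mu$.

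\textbf{General $\pi$.} Here I would discretize. Let $\pi_k=\sum_i\pi(Q_i)\,\delta_{X_i}$, where $\{Q_i\}$ is a fine Borel partition of $\Lambda$ and each $X_i\in\Lambda\cap Q_i$ is chosen suitably. Replace $\nu_X$ by $\nu_i=\pi(Q_i)^{-1}\int_{Q_i}\nu_X\,d\pi$. This average does not have barycenter exactly $X_i$, only approximately. One fix is to use one segment per $X$ in a measurable family, since $x\mapsto\mu_x$ only has to be $\lambda$-measurable. We can parametrize $\spt\pi$ measurably with a kernel direction $e_X$ chosen by a measurable selection, and place disjoint parallel segments $\ell_X$ indexed by a measurable injection into $\Omega$. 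This gives an exact construction, and I expect it to work. The alternative is an approximation route: obtain $\mu_k\to\mu$ with exact constructions for each $k$ and recover sequences $V_n^{(k)}$, then extract a diagonal sequence. This works because weak-$*$ convergence on the compact set $\partial B$ is metrizable, and regular vanishing is preserved along the diagonal sequence.

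\textbf{Main obstacle.} The hard step is making the exact measurable placement of segments rigorous, or alternatively fixing the barycenter defect in the discretization. For the discretization route, I would pick $X_i:=\bar\nu_i=\pi(Q_i)^{-1}\int_{Q_i}X\,d\pi$. This is the right barycenter, but it may fail to lie in $\Lambda$, since $\Lambda$ is not convex. I would therefore first try the measurable-segment construction. Concretely, I would use a Borel map $X\mapsto(e_X,p_X)$ into the set of unit vectors times base points, with pairwise disjoint segments $\{p_X+te_X:|t|<r\}$. To obtain these I would push $\pi$ forward to a diffuse parameter: replace $\pi$ by $\pi\otimes\Leb^1\restr[0,1]$, so that segments indexed by $(X,s)$ can be offset transversally by $s$, which makes the resulting $\lambda$ genuinely absolutely continuous on a slab. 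Finally, Theorem~\ref{thm:main} yields the regular vanishing sequence with $\Theta_{V_n}\toweakstar\mu$.
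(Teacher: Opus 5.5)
There is a genuine gap, and the detour that causes it is unnecessary. Condition (i) of Remark~\ref{rem:localization} is a statement in $\mathcal D'(\Omega;\R^m)$, so it is tested only against $\varphi\in C^\infty_c(\Omega)$.

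Your objection to a constant family is correct for a ball $B$ with $\overline B\subset\Omega$: the jump of $\bar\mu\,\mathds 1_B$ across $\partial B$ produces a nonzero layer on $\partial B$ unless $\bar\mu=0$. The objection disappears if you spread $\lambda$ over all of $\Omega$. Take $\lambda\coloneqq\vol(\Omega)^{-1}\Leb^d\restr\Omega$ and $\mu_x\equiv\mu$. Then $\bar\mu_x\lambda$ is a constant field on $\Omega$, and $\int_\Omega\bar\mu\,\nabla\varphi\,dx=0$ for every $\varphi\in C^\infty_c(\Omega)$. Moreover $\lambda(\partial\Omega)=0$, and (ii) is exactly the hypothesis on $\mu$. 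So Theorem~\ref{thm:main} applies directly, and this one-line verification is the paper's proof.

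The construction you propose instead fails at its key step. For a segment $\ell=\llbracket a,b\rrbracket$ with unit direction $e$ one has $\la\Div(X\Haus^1\restr\ell),\varphi\ra=-\int_\ell X\nabla\varphi\,d\Haus^1$.
\begin{itemize}
\item Only the components of the rows of $X$ along $e$ integrate to the endpoint term $(Xe)(\delta_a-\delta_b)$.
\item The components orthogonal to $e$ leave a transverse first-order term, which does not vanish.
\end{itemize}
Choosing $e\in\ker X$ makes every row of $X$ orthogonal to $e$. The divergence is then entirely this transverse term, which is nonzero as soon as $X\neq0$. For example, take $d=m=2$, $X=e_1\otimes e_2\in\Lambda$ and $e=e_1$: the first row gives $\varphi\mapsto-\int_\ell\partial_2\varphi\,d\Haus^1$. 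Hence condition (i) fails for your $\lambda$.

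A segment parallel to the rows does not rescue this either. It handles only rank-one $X$, and it still leaves the Dirac masses $(Xe)(\delta_a-\delta_b)$ unless its endpoints lie outside $\Omega$. The object actually compatible with $e\in\ker X$ is a hyperplane section $X\,\Haus^{d-1}\restr(H\cap\Omega)$, with $H$ orthogonal to $e$ and crossing $\Omega$. Even with that fix, you would still have to carry out the measurable placement for a diffuse $\pi$, which you left open. None of this is needed once (i) is read in $\mathcal D'(\Omega)$.
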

\begin{proof}
Apply Theorem~\ref{thm:main} with $\lambda\coloneqq\vol(\Omega)^{-1}\Leb^d\restr\Omega$ and $\mu_x\coloneqq\mu$ for every $x$: then $\lambda(\partial\Omega)=0$, (ii) holds by assumption, and (i) holds because $\bar \mu_x\lambda=\vol(\Omega)^{-1}\bar\mu\,\Leb^d\restr\Omega$ is a constant field on $\Omega$, hence divergence-free.
\end{proof}
\medskip
\noindent\textbf{The filling lemma.} The whole difficulty is in the word \emph{vanishing}. Out of (i)--(ii), the theory of $\mathcal A$-free generalized Young
measures~\cite{ArroyoRabasa20} produces a divergence-free $C^1$ sequence converging to zero in measure
(see \S\ref{sec:reduction}); but such a sequence needs not be supported on small sets, and truncating
it may destroy the PDE constraint. What has to be repaired is the \emph{truncated field}: the part of $V$
living on a small set $A$ is no longer divergence-free, and
$\Div(V\mathds 1_A) = -\Div(V-V\mathds 1_A)$ is the \emph{load} it sheds. The repair is the following
lemma, which we regard as the main technical contribution of this work.

\begin{lemma}[Filling by currents]\label{lem:filling}
Let $\Omega\subset\R^d$ be bounded, connected and open, let $v\in L^1(\Omega;\R^d)$ have compact support in
$\Omega$, and let $f\coloneqq\Div v$. Then for every $\eta>0$ there is ${w}\in L^1(\Omega;\R^d)$ with
\[
    \Div {w} = f \ \text{ in }\mathcal D'(\Omega),
    \qquad \int_\Omega|{w}|\,dx \ \le\ \fl{f}+\eta,
    \qquad \vol( \{ w\neq 0\}) \ \le\ \eta ,
\]
where $\fl\cdot$ is the Kantorovich--Rubinstein norm of the load relative to $\Omega$, the supremum
of $u \mapsto \la f,u\ra$ over the functions that are $1$-Lipschitz for the geodesic distance of $\Omega$ (see \eqref{eq:KR}).

If moreover $f$ is a measure supported on a compact Lebesgue null-set $K\subset \Omega$, then we have in addition that
\[
\vol( \spt w) \ \le\ \eta.
\]
\end{lemma}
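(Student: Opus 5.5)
We prove Lemma~\ref{lem:filling} by duality followed by discretization. The load $f=\Div v$ is a distribution with compact support in $\Omega$ and zero total mass, since $\la f,1\ra=0$. So $\fl f$ is finite and is the dual Kantorovich--Rubinstein/Arens--Eells norm for the geodesic distance $d_\Omega$.

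\textbf{Step 1: reduce to a finite atomic load.} Mollify $v$ at a small scale, so that $v_\varepsilon=v*\rho_\varepsilon$ is smooth and still compactly supported. Write
\[
v-v_\varepsilon=\Div(\text{something small})
\]
only in the weak sense: the correction $w_1=v-v_\varepsilon$ satisfies $\Div w_1=f-f_\varepsilon$. However, $w_1$ is small in $L^1$ but not small in support. So we do not use it directly. Instead we approximate $f$ by the discrete measure
\[
f_h=\sum_i f(Q_i)\,\delta_{x_i},
\]
obtained from a cube grid of size $h$. Concretely, $f_h=\Div(v_h)$ in the sense of flat chains: we replace $v$ on each cube $Q_i$ by the polyhedral $1$-current obtained by pushing the flux $\int_{Q_i}v$ through the cube boundary. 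The key estimate is
\[
\|f-f_h\|_{\mathrm{flat}}\le C h\|v\|_{L^1}+o(1).
\]
This error is realized as $\Div$ of a field $w_2$ that is concentrated near the $(d-1)$-skeleton of the grid. Since a field carrying a flux across a thin slab can be squeezed into a tube of small volume with the same $L^1$ norm, $w_2$ has small mass and small support. Also, $\fl{f_h}\le\fl f+Ch\|v\|_{L^1}$ by lower semicontinuity/duality.

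\textbf{Step 2: fill the atomic load.} For $f_h=\sum a_i\delta_{x_i}$ with $\sum a_i=0$, the Kantorovich duality on the finite metric space $(\{x_i\},d_\Omega)$ gives an optimal transport plan $\gamma=\sum\gamma_{ij}\delta_{(x_i,x_j)}$ with
\[
\sum\gamma_{ij}\,d_\Omega(x_i,x_j)=\fl{f_h}.
\]
Here we use that McShane extension makes the restricted and global Lipschitz suprema agree. Connect each pair by a smooth curve $\sigma_{ij}\subset\Omega$ of length at most $d_\Omega(x_i,x_j)+\eta'$. The current $T=\sum\gamma_{ij}[\sigma_{ij}]$ has $\partial T=f_h$ up to sign and mass at most $\fl{f_h}+\eta'$. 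Now thicken each curve into a tube of cross-sectional volume $\delta$ carrying the constant flux $\gamma_{ij}/\delta$ along the tangent, with conical spreading near the endpoints. Its divergence is then a smooth approximation $\tilde f_h$ of the atoms, its $L^1$ norm equals the mass of $T$ up to $o(1)$, and its support has volume $O(\delta)$. The residual $f_h-\tilde f_h$ is supported in tiny balls and is filled by radial fields $x/|x|^d$-type in those balls. These fields have mass $O(\text{radius})$ and support of volume $O(\text{radius}^d)$.

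\textbf{Step 3: assemble.} Set $w=w_2+w_{\rm tubes}+w_{\rm balls}$ and choose $h$, then $\eta'$, then $\delta$ and the radii small. For the measure case with $K$ compact and null, we choose the grid cubes meeting $K$. Then all pieces are supported in a neighbourhood of $K$ of arbitrarily small volume, together with finitely many tubes, so $\vol(\spt w)\le\eta$. In the general case we only control $\{w\neq0\}$, because $v\in L^1$ may have dense support; the slab-squeezing of Step~1 handles this, since the squeezed field vanishes outside a set of small measure.

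\textbf{Main obstacle.} The main obstacle is Step~1 for a general $L^1$ field $v$ whose divergence is a genuine distribution. We must show that $f$ can be approximated by atomic loads with an error that is filled by a field of small mass \emph{and} small support. My plan is the deformation theorem for normal $1$-currents applied to $T_v=v\,\Leb^d$ (whose boundary is $-f$). It writes
\[
T_v=P+\partial R+S,
\]
with $P$ polyhedral on the $1$-skeleton, $\mathbf M(S)\le Ch\,\mathbf M(\partial T_v)$ when $\partial T_v$ is finite, and, in general, through the flat-norm version. Then $P$ is itself a sum of segments, which we thicken into tubes as in Step~2, and $S$ is replaced by its thickened version. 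The delicate point is that $\mathbf M(P)$ may exceed $\fl f$. We therefore discard $P$'s own geometry and keep only its boundary $f_h=-\partial P$, which is atomic and close to $f$ in KR norm. We then refill $f_h$ optimally.
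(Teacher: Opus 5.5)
There is a genuine gap in Step~1, in the general case, which is exactly where you place the main obstacle. Your handling of the atomic load in Step~2 is essentially sound. The scheme, however, needs a field $w_2$ with $\Div w_2=f-f_h$, small $L^1$ norm \emph{and} small $\vol(\{w_2\neq0\})$. That is the lemma itself, applied to the load $f-f_h$, and nothing you propose produces it.

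This is not a technicality. Take $v\in C^\infty_c(\Omega;\R^d)$ with $f=\Div v\neq0$ on an open set $G$. Every filling of $f-f_h$ satisfies $\spt w_2\supset\spt(f-f_h)\supset\overline G$, so $\{w_2\neq0\}$ must be a set of small measure that is dense in $G$. A field assembled from finitely many tubes, cones or slab pieces has closed support of essentially the same volume as its nonzero set, so no single-scale construction can achieve this.

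The deformation theorem does not help. $T_v$ is normal only if $f$ is a finite measure; otherwise the bound $\mathbf M(S)\le Ch\,\mathbf M(\partial T_v)$ is void. Even for $f\in L^1$, the chain $S$ is the homotopy image of a diffuse boundary, spread over whole cubes and lower-dimensional faces, so there is nothing thin to thicken. ``Squeezing'' a flux into a tube is not a construction either: it changes the divergence where the flux enters and leaves the tube, which creates a new load that must again be filled thinly. Note also that $f(Q_i)$ is meaningless for a distribution, so you must mollify first.

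The missing idea is never to fill a residual at all, but to iterate the discretization. The mollification error should enter only through $\fl{f-\Div v_\varepsilon}\le\|v-v_\varepsilon\|_{L^1}$ and never be used as a filling. Approximate $f-f_h$ again by a molecule with half the error, and so on. This gives an \emph{exact} countable decomposition $f=\sum_i\lambda_i(\delta_{a_i}-\delta_{b_i})$, convergent in the KR norm, with $\sum_i\lambda_i d_\Omega(a_i,b_i)\le\fl f+\varepsilon$ (Proposition~\ref{prop:AE}).

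Each dipole is then joined by a polygonal path whose segments are thickened by the double cones of Lemma~\ref{lem:thicken}. These have divergence exactly $\delta_a-\delta_b$, so you need neither radial corrections nor a curvature correction for curved tubes. For the $i$-th dipole, allow mass excess and volume at most $2^{-i}\eta$. The series then converges in $L^1$, $\Div$ passes to the limit and telescopes to $f$, and $\vol(\{w\neq0\})\le\sum_i2^{-i}\eta$. The infinitely many cones supply precisely the small, dense nonzero set that is required.

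In the measure case, the closed support of such a countable family may be large. This is why the paper proves Lemma~\ref{lem:refine}, whose connecting segments accumulate only on $K$. Here your single-scale idea is in fact salvageable, and simpler, provided the residual on each cube $Q_i$ meeting $K$ is filled locally. For instance, $\int_{Q_i}\sigma^\varepsilon_{x,x_i}\,df(x)$ has divergence $f\restr Q_i-f(Q_i)\delta_{x_i}$, $L^1$ norm at most $(\sqrt d\,h+2\varepsilon)\,|f|(Q_i)$, and support within distance $\sqrt d\,h+\varepsilon$ of $K$. This is not, however, what your Step~1 field $w_2$ provides.
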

A load can therefore always be equilibrated by a field that is both \emph{light} of almost
optimal cost and \emph{thin} (supported on a set of arbitrarily small volume). Our proof is
geometric and rests on Arens--Eells duality through the ideas introduced in~\cite{ArroyoRabasaBouchitte25}. It matters that the cost is measured by the
\emph{intrinsic} Kantorovich--Rubinstein norm rather than by Whitney's flat norm \cite{Whitney57}:
the two agree when the geodesic distance of $\Omega$ is Euclidean and are comparable exactly when
$\Omega$ is quasiconvex (in the metric sense)~\cite{ArroyoRabasaBouchitte25,bate2025structure}, but they differ in general, and only the intrinsic one
is finite exactly for the loads that can be balanced \emph{inside} $\Omega$. This is what makes
Lemma~\ref{lem:filling} sharp, regardless of the domain's geometry.

We point out that replacing $\{w \neq 0\}$ with $\spt w$ will be crucial for our goals, especially to exhibit a \emph{regular} vanishing sequence. Since $\spt f\subset\spt w$ whenever $\Div w=f$, this improvement cannot be expected if $v\in L^1$ only: the support of the load $f=\Div v$ may then have positive measure (take $f=\phi\,\Leb^d$ with $\phi\in C^\infty_c(\Omega)$ of zero mean and $\{\phi\neq0\}$ of positive measure), in which case every filling $w$ of $f$ satisfies $\vol(\spt w)\ge\vol(\spt f)$.
 In fact, we will exploit the existence of $C^1$ recovery sequences to obtain a refined duality result for measures of this kind (cf.\ Lemma \ref{lem:refine}) that we believe of interest in its own right.

\medskip
\noindent\textbf{Declaration of generative AI use:} During the preparation of this work, the authors used AI solely to assist in generating the illustration. All mathematical results, proofs, and analyses were conducted independently by the authors, who take full responsibility for the contents of this note.

%=====================================================================
\section{Notation} 
%=====================================================================
$\Meas(\Omega;\X)$ denotes the finite $\X$-valued Radon measures,
$\Measp$ the positive ones and $\Prob$ the probability measures; $\Leb^d$, $\Haus^k$ are the Lebesgue
and Hausdorff measures; $\bar\nu\coloneqq\int z\,d\nu(z)$ is the barycenter of $\nu$, and
$\llbracket a,b\rrbracket$ the oriented segment from $a$ to $b$. The length $\int_0^1 |\gamma'| \, dt$ of a Lipschitz curve $\gamma: [0,1] \to \Omega$ will be denoted by $\ell(\gamma)$. Notice that $\Haus^1(\gamma([0,1]))\le \ell(\gamma)$, with equality when $\gamma$ is injective.

%=====================================================================
\section{Reduction to a modification lemma}\label{sec:reduction}
%=====================================================================

Fix $\lambda$ and $\{\mu_x\}$ as in Theorem~\ref{thm:main}. The object we shall prescribe, simply as an auxiliary tool, is the
generalized Young measure with trivial oscillation part
\begin{equation}\label{eq:choice}
    \bnu \ \coloneqq\ \bigl(\delta_0,\ \lambda,\ \mu_x\bigr) .
\end{equation}
We show
that $\bnu$ satisfies the axioms of divergence-free Young measures under~(i)--(ii), so that \cite{ArroyoRabasa20} produces a divergence-free sequence
generating it: such a sequence converges to zero in measure because the oscillation part is
$\delta_0$, and its limiting concentration is $(\lambda,\mu_x)$ because the concentration part of $\bnu$ is
$(\lambda,\mu_x)$. Only $\vol(\spt V_n)\to0$ is then missing; this is the point of \S\ref{sec:filling}.

\medskip
\noindent\textbf{Generalized Young measures.} A generalized Young measure
\cite{AlibertBouchitte97,DiPernaMajda87} is a triple
$\bnu = (\nu_x,\lambda,\nu^\infty_x)$ in which $x\mapsto\nu_x\in\Prob(\X)$ is a $\Leb^d$-measurable family with $\int_\Omega\int |A|\,d\nu_x(A)\,dx<\infty$, $\lambda\in\Measp(\Ob)$, and $x\mapsto\nu^\infty_x\in\Prob(\partial B)$ is a $\lambda$-measurable family. A continuous $g:\X\to\R$ is an \emph{admissible integrand} if $|g(A)|\le C(1+|A|)$ and the recession
function $g^\infty(A)\coloneqq\lim_{t\to\infty}t^{-1}g(tA)$ exists and is continuous; $g^{\#}$
denotes the corresponding upper limit.
We say that a sequence
$(V_n)_n\subset L^1(\Omega;\X)$ \emph{generates} a Young measure $\bnu$, writing $V_n\toY\bnu$, if
\begin{equation}\label{eq:generation}
    \int_\Omega g(V_n)\,\varphi\,dx \ \longrightarrow\
    \int_\Omega\la\nu_x,g\ra\,\varphi\,dx+\int_{\Ob}\la\nu^\infty_x,g^\infty\ra\,\varphi\,d\lambda(x)
\end{equation}
for every admissible integrand $g$ and every $\varphi\in C(\Ob)$. Here $\nu_x$ records the oscillation at $x$, $\lambda$ the spatial distribution of the concentrating mass $|V_n|\Leb^d$, and $\nu^\infty_x$ its directions at $x$. Every sequence bounded in $L^1(\Omega;\X)$ has a subsequence generating a Young measure \cite{AlibertBouchitte97,KristensenRindler10}, and testing against the products $\varphi\otimes g$ is equivalent to testing against all integrands $f(x,A)$ of the class considered in \cite{ArroyoRabasa20}, by density and the boundedness of the masses. The barycenter of $\bnu$ is the measure
\[
\bar \bnu \ \coloneqq \ \bar \nu_x \, \Leb^d \restr \Omega \ + \  \bar\nu^\infty_x \, \lambda \ \in \ \Meas(\Ob;\X).
\]
 Only trivial oscillation parts $\nu_x=\delta_0$ will occur here.

\begin{lemma}\label{lem:theta}
Let $(V_n)_n\subset L^1(\Omega;\X)$ be bounded in $L^1$, let $\lambda\in\Measp(\Ob)$ and let $x\mapsto\nu^\infty_x\in\Prob(\partial B)$ be a (weakly) $\lambda$-measurable family. The following are equivalent:
\begin{enumerate}
    \item[(a)] $(V_n)_n$ generates $(\delta_0,\lambda,\nu^\infty_x)$,
    \item[(b)] $V_n\to0$ in measure and
    \[
        \bTheta_{V_n}\ \toweakstar\ \lambda\otimes\nu^\infty_x \quad \text{ in $\Meas(\Ob\times\partial B)$.}
    \]
\end{enumerate}
In this case $\|V_n\|_{L^1(\Omega)}\to\lambda(\Ob)$ and $\Theta_{V_n}\toweakstar\int_{\Ob}\nu^\infty_x\,d\lambda$ in $\Meas(\partial B)$. For the choice \eqref{eq:choice} the limiting concentration is $(\lambda,\mu_x)$.
\end{lemma}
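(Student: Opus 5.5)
The plan is to translate both conditions into statements about integrals of test functions against $\bTheta_{V_n}$. The key identity is that, for any $h\in C(\Ob\times\partial B)$,
\[
\int h\,d\bTheta_{V_n} \ = \ \int_\Omega h\bigl(x,\tfrac{V_n}{|V_n|}\bigr)\,|V_n|\,dx ,
\]
the integrand being set to $0$ where $V_n=0$. For an admissible $g$ with continuous recession function $g^\infty$, the right-hand side with $h=\varphi\otimes g^\infty$ is $\int\varphi\, g^\infty(V_n)\,dx$, by positive $1$-homogeneity of $g^\infty$.

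For (a)$\Rightarrow$(b), I would first show $V_n\to0$ in measure. Test \eqref{eq:generation} with $g(A)=\min\{|A|,1\}$, which has $g^\infty=0$, and with $\varphi=1$. This gives $\int_\Omega\min\{|V_n|,1\}\,dx\to\la\delta_0,g\ra\vol(\Omega)=0$, which is convergence in measure. Next, take $\psi\in C(\partial B)$ and $\varphi\in C(\Ob)$, and let $g_\psi(A)\coloneqq|A|\,\psi(A/|A|)$, extended by $g_\psi(0)=0$. This is continuous, positively $1$-homogeneous, admissible, and satisfies $g_\psi^\infty=g_\psi$. Then \eqref{eq:generation} yields
\[
\int\varphi\otimes\psi\,d\bTheta_{V_n}\ \to\ \int_{\Ob}\varphi\la\nu^\infty_x,\psi\ra\,d\lambda ,
\]
since $\la\delta_0,g_\psi\ra=0$. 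Linear combinations of such products are dense in $C(\Ob\times\partial B)$ by Stone--Weierstrass, and the masses $\bTheta_{V_n}(\Ob\times\partial B)=\|V_n\|_{L^1}$ are bounded. Hence weak-$*$ convergence follows.

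For (b)$\Rightarrow$(a), which is the main step, fix an admissible $g$ and $\varphi$ and split $g=(g-g^\infty)+g^\infty$. The term with $g^\infty$ converges to $\int\varphi\la\nu^\infty_x,g^\infty\ra\,d\lambda$ by the identity above, since $g^\infty$ restricted to $\partial B$ is continuous. The remainder $r\coloneqq g-g^\infty$ satisfies $|r(A)|=o(|A|)$ as $|A|\to\infty$. I would want to say that $r$ is bounded near infinity by $\varepsilon|A|$, and this is the main obstacle: it requires the recession limit to be uniform on directions. That uniformity should follow from the existence and continuity of $g^\infty$ together with the linear growth; if needed, one can reduce to the class $\mathbf E$ of \cite{AlibertBouchitte97} by density, where the uniformity holds by definition. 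Granting it, fix $\varepsilon>0$ and choose $R$ with $|r(A)|\le\varepsilon|A|$ for $|A|\ge R$. Then
\[
\Bigl|\int\varphi\, r(V_n)\Bigr|\ \le\ \|\varphi\|_\infty\Bigl(\varepsilon\sup_n\|V_n\|_{L^1}+\int_{\{|V_n|<R\}}|r(V_n)|\,dx\Bigr).
\]
The last integral tends to $|r(0)|\vol(\Omega)=|g(0)|\vol(\Omega)$ by dominated convergence in measure, because $r$ is bounded and continuous on $\{|A|\le R\}$ and $V_n\to0$ in measure. More precisely, I would run the argument with $\int\varphi\,(r(V_n)-r(0))$, which tends to $0$. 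This leaves the limit $\int\varphi\, g(0)\,dx=\int\varphi\la\delta_0,g\ra\,dx$, as required.

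The remaining claims are immediate. Take $h=1$ to get $\|V_n\|_{L^1}=\bTheta_{V_n}(\Ob\times\partial B)\to\lambda(\Ob)$. Take $h=1\otimes\psi$ to get $\Theta_{V_n}\toweakstar\int\nu^\infty_x\,d\lambda$. For the choice \eqref{eq:choice}, the limit of $\bTheta_{V_n}$ is $\lambda\otimes\mu_x$, so by definition the limiting concentration is $(\lambda,\mu_x)$.
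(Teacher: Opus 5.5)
For (a)$\Rightarrow$(b) your argument is the paper's. The paper tests with $(|A|/\varepsilon)\wedge1$ for each $\varepsilon$ instead of your single $\min\{|A|,1\}$, which makes no difference.

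For (b)$\Rightarrow$(a) you take a genuinely different route. The paper argues softly:
\begin{itemize}
\item by the compactness theorem for generalized Young measures, every subsequence has a further subsequence generating some $\tilde\bnu$;
\item convergence in measure forces $\tilde\nu_x=\delta_0$;
\item the already proved (a)$\Rightarrow$(b) gives $\bTheta_{V_n}\toweakstar\tilde\lambda\otimes\tilde\nu^\infty_x$;
\item uniqueness of weak-$*$ limits and of disintegrations identifies $\tilde\bnu$;
\item metrizability of bounded sets of Young measures passes to the whole sequence.
\end{itemize}
You instead verify \eqref{eq:generation} integrand by integrand. You send $g^\infty$ through $\bTheta_{V_n}$ and kill $r=g-g^\infty$ by hand. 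Your argument is self-contained: it needs no compactness theorem, no metrizability and no uniqueness of disintegrations, and it shows exactly what is required of the integrands. The paper's route avoids any estimate on $g-g^\infty$, but its heavier machinery silently contains the same requirement. Your corrected bookkeeping with $r(V_n)-r(0)$ works once you note that on $\{|V_n|\ge R\}$ the extra term is $|r(0)|\,\vol(\{|V_n|\ge R\})\to0$.

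What needs fixing is the justification of uniformity. Your first suggestion, that it follows from existence and continuity of $g^\infty$ plus linear growth, is false. Take a unit matrix $E$ and $\phi\in C_c((1,2))$ with $0\le\phi\le1$ and $\phi(3/2)=1$. Then $g(A)=|A|\,\phi\bigl(|A|\,\bigl|A/|A|-E\bigr|\bigr)$, $g(0)=0$, is continuous with $|g(A)|\le|A|$ and has pointwise recession function $g^\infty\equiv0$. Yet $\sup_{A\in\partial B}t^{-1}g(tA)=1$ for all $t\ge1$.

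Worse, take $A_n\in\partial B$ with $|A_n-E|=3/(2n)$, and $V_n=nA_n\mathds 1_{E_n}$ with $\vol(E_n)=1/n$ and $E_n$ shrinking to some $x_0\in\Omega$. Then (b) holds with $(\lambda,\nu^\infty_x)=(\delta_{x_0},\delta_E)$, but $\int_\Omega g(V_n)\,dx=1\not\to0$. So under the literally pointwise definition of admissible integrand written in the paper, the lemma is false, and so is the compactness theorem the paper invokes. No density argument can rescue it.

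Your second suggestion is the correct one, but it holds by definition rather than by density. Admissible integrands must be those of the class $\mathbf E$, where $g^\infty(A)=\lim_{A'\to A,\,t\to\infty}t^{-1}g(tA')$. Uniformity on the compact sphere then follows by contradiction. If $|t_k^{-1}g(t_kA_k)-g^\infty(A_k)|\ge\varepsilon$ with $t_k\to\infty$ and $A_k\to A$ in $\partial B$, the joint limit and continuity of $g^\infty$ force both terms to $g^\infty(A)$. With this reading, which the paper also uses tacitly, your proof is complete.
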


\begin{proof}
(a)$\Rightarrow$(b). Let $\varphi\in C(\Ob)$, $\psi\in C(\partial B)$ and set $g(A)\coloneqq|A|\,\psi(A/|A|)$ for $A\ne0$, $g(0)\coloneqq0$.
Then $g$ is continuous and positively $1$-homogeneous, hence admissible with $g^\infty = g$, and
$g = \psi$ on $\partial B$; by \eqref{eq:generation},
\[
    \la\bTheta_{V_n},\varphi\otimes\psi\ra \ = \ \int_\Omega \varphi\,g(V_n)\,dx
    \ \longrightarrow\ g(0)\int_\Omega\varphi\,dx+\int_{\Ob}\la\nu^\infty_x,\psi\ra\,\varphi\,d\lambda
    \ = \ \la\lambda\otimes\nu^\infty_x,\varphi\otimes\psi\ra .
\]
Taking $\varphi\otimes\psi\equiv1$ shows that the masses $\bTheta_{V_n}(\Ob\times\partial B)=\|V_n\|_{L^1(\Omega)}$ converge to $\lambda(\Ob)$, and since the linear span of the products $\varphi\otimes\psi$ is dense in $C(\Ob\times\partial B)$, we obtain $\bTheta_{V_n}\toweakstar\lambda\otimes\nu^\infty_x$. The statement for $\Theta_{V_n}$ follows by taking $\varphi\equiv1$.
For the convergence in measure, take any $\varepsilon >0$, $\varphi\equiv1$ and $g(A) = (|A| / \varepsilon )\wedge1$, admissible with
$g^\infty\equiv0$. Then, we observe that $\int_\Omega (|V_n|/ \varepsilon) \wedge1\,dx\to\la\delta_0,g\ra\,\vol(\Omega) = 0$ as well as that $\mathds 1_{\{|V_n|>\varepsilon\}} \le  (|V_n| / \varepsilon)\wedge1$ holds a.e.. In particular, we obtain
\[
    \vol(\{|V_n|>\varepsilon\}) \le \int_\Omega\frac{|V_n|}{\varepsilon}\wedge1\,dx\to 0,\qquad \text{as }n\to\infty,
\]
as desired.

(b)$\Rightarrow$(a). Let $(V_{n_j})_j$ be an arbitrary subsequence. Since it is bounded in $L^1$, a further subsequence (not relabelled) generates some Young measure $\tilde\bnu=(\tilde\nu_x,\tilde\lambda,\tilde\nu^\infty_x)$. For $f\in C_c(\X)$, which is admissible with $f^\infty\equiv0$, and for every $\delta>0$ we have $\int_\Omega|f(V_{n_j})-f(0)|\,dx\le2\|f\|_\infty\vol(\{|V_{n_j}|>\delta\})+\omega_f(\delta)\vol(\Omega)$, where $\omega_f$ is the modulus of continuity of $f$. Therefore, taking first $j\to \infty$ and then $\delta\to 0$, we obtain $f(V_{n_j})\to f(0)$ in $L^1(\Omega)$, and testing \eqref{eq:generation} with $f$ gives $\la\tilde\nu_x,f\ra=f(0)$ for $\Leb^d$-a.e.\ $x$. Applying this to a countable dense subset of $C_c(\X)$ we get $\tilde\nu_x=\delta_0$ for a.e.\ $x$. By the implication (a)$\Rightarrow$(b) already proved, $\bTheta_{V_{n_j}}\toweakstar\tilde\lambda\otimes\tilde\nu^\infty_x$, so that $\tilde\lambda\otimes\tilde\nu^\infty_x=\lambda\otimes\nu^\infty_x$ by uniqueness of weak-$*$ limits. Taking first marginals gives $\tilde\lambda=\lambda$, and the uniqueness of the disintegration gives $\tilde\nu^\infty_x=\nu^\infty_x$ for $\lambda$-a.e.\ $x$. Every subsequence thus has a further subsequence generating $(\delta_0,\lambda,\nu^\infty_x)$. Finally, since bounded sets of Young measures are metrizable, the whole sequence does.
\end{proof}

\begin{lemma}[Localization]\label{lem:localization}
Let $(V_n)_n\subset L^1(\Omega;\X)$ be divergence-free, bounded in $L^1$ and converging to zero in measure, with limiting concentration $(\lambda,\mu_x)$. Then $\mu_x$ is a $\Lambda$-superposition for $\lambda$-a.e.\ $x\in\Omega$.
\end{lemma}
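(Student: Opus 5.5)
We follow the route suggested in Remark~\ref{rem:localization}: combine Theorem~\ref{thm:VMC} with a blow-up (tangent Young measure) argument in the spirit of \cite{ARDPR_Advances}. By Lemma~\ref{lem:theta}, the sequence $(V_n)_n$ generates the generalized Young measure $\bnu=(\delta_0,\lambda,\mu_x)$. We fix a point $x_0\in\Omega$ that is regular for this Young measure; $\lambda$-almost every point is regular, up to the following caveat. The singular part $\lambda^s$ of $\lambda$ with respect to $\Leb^d$ and the absolutely continuous part must be treated separately, since the Lebesgue part of $\bnu$ is trivial. At such an $x_0$ we blow up. For $r\downarrow0$ we set
\[
V_{n}^{r}(y)\coloneqq c_r\,r^{d}\,V_n(x_0+ry),\qquad y\in B_1,\qquad c_r\coloneqq\lambda(B_r(x_0))^{-1}.
\]
These fields are still divergence-free. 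Choosing a diagonal subsequence $n=n(r)$, we arrange that they converge to zero in measure, which is possible because $V_n\to0$ in measure at fixed $r$. We also arrange that they generate a tangent Young measure $(\delta_0,\tau,\mu_y')$. Here $\tau$ is a tangent measure of $\lambda$ at $x_0$, and $\mu'_y=\mu_{x_0}$ for $\tau$-a.e.\ $y$, by Lebesgue differentiation of the weakly measurable family $x\mapsto\mu_x$ with respect to $\lambda$.

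The key step is then to apply Theorem~\ref{thm:VMC} to the blown-up sequence on $B_1$, or on a smaller ball $B_{1/2}$ chosen so that $\tau(\partial B_{1/2})=0$ and no mass is lost at the boundary. Its averaged concentration is $\tau(B_{1/2})^{-1}\int\mu'_y\,d\tau=\mu_{x_0}$, after normalizing the $L^1$ mass to $1$. This is legitimate: by Lemma~\ref{lem:theta} the normalized masses converge to $\tau(B_{1/2})$, so dividing by them yields uniformly bounded divergence-free fields converging to zero in measure whose $\Theta$ converges to $\mu_{x_0}$. Theorem~\ref{thm:VMC} then gives that $\mu_{x_0}$ is a $\Lambda$-superposition.

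The main obstacle is the measure-theoretic bookkeeping of the blow-up. Three points need care. First, we need the existence of a nonzero tangent measure $\tau$ with $\tau(B_{1/2})>0$ and $\tau(\partial B_{1/2})=0$, which follows from Preiss' tangent measure theory. Second, the disintegration must be constant, $\mu'_y=\mu_{x_0}$, which requires $x_0$ to be a $\lambda$-Lebesgue point of $x\mapsto\langle\mu_x,\psi\rangle$ for a countable dense set of $\psi\in C(\partial B)$. Third, we need a diagonal extraction compatible with the weak-$*$ convergence of $\bTheta$. A simpler alternative that avoids tangent measures altogether is to localize directly. Given a countable dense family $\varphi_k\in C_c(\Omega)$, $\varphi_k\ge0$, we multiply $V_n$ by $\varphi_k$. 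The product $\varphi_kV_n$ fails to be divergence-free, with error $V_n\nabla\varphi_k$, which tends to zero in measure. This error is not small in $L^1$, so this route does not work with Theorem~\ref{thm:VMC} as stated. We therefore commit to the blow-up, where the error terms from cut-offs scale like $r\,\|V_n^r\|_{L^1}$ and can be removed. Concretely, one uses the blow-up only on balls where no cut-off is needed, since restricting to a subdomain preserves the divergence-free constraint. Theorem~\ref{thm:VMC} is applied on the open set $B_{1/2}$ itself, and $\Omega$ there is only assumed bounded and connected.
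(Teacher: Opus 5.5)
Your proposal is correct and follows essentially the paper's route. You blow up at $\lambda$-a.e.\ $x_0$ to get a tangent concentration with trivial oscillation and constant direction family $\mu_{x_0}$, restrict the rescaled fields (rather than cutting them off) to a subdomain whose boundary the tangent measure does not charge, normalize, and apply Theorem~\ref{thm:VMC}. The only difference is that the paper quotes the localization principles of \cite{ArroyoRabasa20,ARDPR_Advances} on cubes and treats regular and singular points separately, whereas you do the blow-up by hand on balls with the single normalization $\lambda(B_r(x_0))^{-1}$, which covers both kinds of points at once, so the regular/singular split you announce is not actually needed.
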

\begin{proof}
By Lemma~\ref{lem:theta} (and its last conclusion), $(V_n)_n$ generates $\bnu\coloneqq(\delta_0,\lambda, \mu_x)$, which is therefore a divergence-free generalized Young measure. We use the localization principles for such Young measures \cite{ArroyoRabasa20,ARDPR_Advances}: for $\Leb^d$-a.e.\ $x_0\in\Omega$ (regular points) and for $\lambda^s$-a.e.\ $x_0\in\Omega$ (singular points) there is a tangent Young measure $\sigma$ on the unit cube $Q$, generated by divergence-free fields $W_j\in L^1(Q;\X)$ obtained by rescaling, translating, and renormalizing the restrictions of a subsequence of $(V_n)_n$ to small cubes centred at $x_0$, of the form
\begin{enumerate}
    \item $\sigma=\bigl(\delta_0,\ \lambda^{ac}(x_0)\,\Leb^d\restr Q,\ \mu_{x_0}\bigr)$ at regular points,
    \item $\sigma=\bigl(\delta_0,\ \lambda_{x_0},\ \mu_{x_0}\bigr)$  at singular points,
\end{enumerate}
where $\lambda^{ac}$ is the density of the absolutely continuous part of $\lambda$ and $\lambda_{x_0}\in\Measp(\overline Q)$ is a non-zero tangent measure of $\lambda^s$ at $x_0$. In both cases the direction part of $\sigma$ is the constant family $\mu_{x_0}$. At a regular point with $\lambda^{ac}(x_0)>0$, and at a singular point, Lemma~\ref{lem:theta} shows that $W_j\to0$ in measure and $\Theta_{W_j}\toweakstar c\,\mu_{x_0}$ in $\Meas(\partial B)$ with $c=\lambda^{ac}(x_0)$, respectively $c=\lambda_{x_0}(\overline Q)$, positive; Theorem~\ref{thm:VMC}, applied in $Q$ to the normalized fields $W_j/\|W_j\|_{L^1(Q)}$, then shows that $\mu_{x_0}$ is a $\Lambda$-superposition. Since the regular points with $\lambda^{ac}(x_0)=0$ form a $\lambda^{ac}$-null set, the conclusion holds for $\lambda$-a.e.\ $x_0\in\Omega$.
\end{proof}

\medskip
\noindent\textbf{Divergence-quasiconvexity.} A locally bounded Borel $h:\X\to\R$ is
\emph{$\Div$-quasiconvex} if $h(A)\le\int_{(0,1)^d}h(A+w(y))\,dy$ for every $(0,1)^d$-periodic
$w\in C^\infty(\R^d;\X)$ with $\Div w = 0$ and mean zero, and \emph{$\Lambda$-convex} if
$t\mapsto h(A+tM)$ is convex for every $A\in\X$ and $M\in\Lambda$. We use three facts, all of which
hold because $\Lambda$ is a closed symmetric cone spanning $\X$: a $\Div$-quasiconvex $h$ is
$\Lambda$-convex \cite{FonsecaMuller99}, \cite[Lem.~4.3]{ArroyoRabasa20}; a $\Lambda$-convex $h$ of
linear growth is globally Lipschitz \cite[Prop.~4.5(d)]{ArroyoRabasa20}, and its upper recession
function $h^{\#}$ is then real-valued, continuous, positively $1$-homogeneous and $\Lambda$-convex
\cite[Lem.~2.4]{KirchheimKristensen16};  $h^{\#}$ is \emph{convex at every point of $\Lambda$},
meaning that for each $X\in\Lambda$ there is a linear function $\rho_X\le h^{\#}$ on $\X$ with
$\rho_X(X) = h^{\#}(X)$ \cite[Thm.~1.1]{KirchheimKristensen16}. We shall also use the subadditivity
\begin{equation}\label{eq:subadd}
    h(A+M) \ \le\ h(A)+h^{\#}(M) , \qquad A\in\X,\ M\in\Lambda ,
\end{equation}
which follows by applying convexity of $t\mapsto h(A+tM)$ to the difference quotients
$h(A+M)-h(A)\le t^{-1}\bigl(h(A+tM)-h(A)\bigr)$, for $t\ge1,$ and letting $t\to\infty$.

\begin{lemma}\label{lem:jensen}
Let $\mu\in\Prob(\partial B)$ be a $\Lambda$-superposition. Then, for every $\Div$-quasiconvex
$h:\X\to\R$ of linear growth and every $s\ge0$, it holds
\begin{equation}\label{eq:jensen}
    h\bigl(s\,\bar\mu\bigr) \ \le\ h(0)+s\,\la\mu,h^{\#}\ra .
\end{equation}
\end{lemma}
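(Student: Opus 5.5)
Write $\mu=\int_\Lambda\nu_X\,d\pi(X)$ with $\bar\nu_X=X\in\Lambda$ for $\pi$-a.e.\ $X$, as in \eqref{eq:superposition}. By linearity of the barycenter and Fubini, $\bar\mu=\int_\Lambda X\,d\pi(X)$. The plan is a Jensen-type argument in two layers. First, the outer average over $\pi$ is handled by $\Lambda$-convexity along rank-deficient directions, in the form of the subadditivity \eqref{eq:subadd}. Second, each inner microstructure $\nu_X$ is handled by the pointwise convexity of $h^{\#}$ at points of $\Lambda$, from \cite[Thm.~1.1]{KirchheimKristensen16}.

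\textbf{Step 1 (inner Jensen at points of $\Lambda$).} For $\pi$-a.e.\ $X$ we have $X\in\Lambda$. Take the supporting linear function $\rho_X\le h^{\#}$ with $\rho_X(X)=h^{\#}(X)$. Since $\rho_X$ is linear and $\bar\nu_X=X$,
\[
h^{\#}(X)=\rho_X(X)=\int\rho_X(z)\,d\nu_X(z)\le\int h^{\#}\,d\nu_X .
\]
Integrating in $\pi$ and using $\mu=\int\nu_X\,d\pi$ gives $\int_\Lambda h^{\#}(X)\,d\pi(X)\le\la\mu,h^{\#}\ra$. Positive $1$-homogeneity of $h^{\#}$ then yields
\[
\int h^{\#}(sX)\,d\pi\le s\la\mu,h^{\#}\ra .
\]
Measurability is not an issue, because $h^{\#}$ is continuous and $X\mapsto\nu_X$ is weakly Borel.

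\textbf{Step 2 (outer Jensen along $\Lambda$).} It remains to show $h(s\bar\mu)\le h(0)+\int h^{\#}(sX)\,d\pi(X)$. I would first treat a finitely supported $\pi=\sum_{i=1}^N t_i\delta_{X_i}$ with $X_i\in\Lambda$. Iterating \eqref{eq:subadd} gives only $h(\sum_i st_iX_i)\le h(0)+\sum_i h^{\#}(st_iX_i)=h(0)+\sum_i t_i h^{\#}(sX_i)$. That is exactly the desired bound, since each $st_iX_i\in\Lambda$ (as $\Lambda$ is a cone) and $h^{\#}$ is $1$-homogeneous. So the discrete case follows from $N$ applications of \eqref{eq:subadd} to the partial sums $A_k=\sum_{i<k}st_iX_i$.

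\textbf{Step 3 (approximation).} For general $\pi$, note that $\int|X|\,d\pi\le1$, since $|\bar\nu_X|\le1$, so $\pi$ is a probability on the compact set $\Lambda\cap\overline B$. I would approximate $\pi$ by finitely supported $\pi_k$ on $\Lambda\cap\overline B$: partition $\Lambda\cap\overline B$ into small Borel pieces and place the mass of each piece at one of its points, which lies in $\Lambda$. Then $\int X\,d\pi_k\to\bar\mu$ and $\int h^{\#}(sX)\,d\pi_k\to\int h^{\#}(sX)\,d\pi$, because $h^{\#}$ is continuous and bounded on compacts. Since $h$ is globally Lipschitz (\cite[Prop.~4.5(d)]{ArroyoRabasa20}, $h$ being $\Lambda$-convex of linear growth), $h(s\int X\,d\pi_k)\to h(s\bar\mu)$, and we pass to the limit in the discrete inequality. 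Combining with Step 1 gives \eqref{eq:jensen}.

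The main obstacle is conceptual rather than technical. The pointwise convexity of $h^{\#}$ holds only at points of $\Lambda$, while $\bar\mu$ itself need not lie in $\Lambda$. This forces the split into the two Jensen steps, with convexity used only along rank-deficient segments in Step 2 and only at barycenters $X\in\Lambda$ in Step 1. The remaining care is the approximation, which must keep the atoms of $\pi_k$ inside $\Lambda$; the partition choice above handles this.
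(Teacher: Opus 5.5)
Your proof is correct and is essentially the paper's argument, presented in the opposite order. The outer layer uses iterated subadditivity \eqref{eq:subadd} for discrete measures supported on $\Lambda\cap\overline B$, followed by a weak-$*$ approximation. The inner layer uses the supporting linear function $\rho_X\le h^{\#}$ at $X=\bar\nu_X\in\Lambda$. The two are joined through $\bar\pi=\bar\mu$.

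Your version adds two details: an explicit construction of the atomic approximants with atoms kept in $\Lambda$, and an appeal to the global Lipschitz continuity of $h$. One small correction: the reduction to $\pi$ concentrated on $\Lambda\cap\overline B$ comes from the pointwise bound $|X|=|\bar\nu_X|\le1$ for $\pi$-a.e.\ $X$, not from $\int|X|\,d\pi\le1$.
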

\begin{proof}
Let $\pi$ and $\{\nu_X\}$ be as in \eqref{eq:superposition}. Since each $\nu_X$ is supported on the
unit sphere, we have $|X| = |\bar\nu_X|\le1$ for $\pi$-a.e.\ $X$. Moreover, up to replacing $\pi$ with $ \mathds 1_{\overline B} \pi$ (which again satisfies \eqref{eq:superposition} together with  $\{\nu_X\}$), we can assume throughout the proof that  $\pi$ is supported on the compact set
$\Lambda\cap\overline B$.  Iterating \eqref{eq:subadd} and using the $1$-homogeneity of $h^{\#}$ gives
\[
    h\Bigl(s\sum_{i=1}^k\alpha_
iX_i\Bigr)\ \le\ h(0)+s\sum_{i=1}^k\alpha_i\,h^{\#}(X_i)
\]
for $s\ge0$, $\alpha_i\ge0$ with $\sum_i\alpha_i = 1$ and $X_i\in\Lambda$. Since the measure $\pi$ is a
weak-$*$ limit of discrete probability measures on $\Lambda\cap\overline B$, on which $h$ and
$h^{\#}$ are continuous and bounded, passing to the limit yields
\begin{equation}\label{eq:step1}
    h\bigl(s\,\bar\pi\bigr)\ \le\ h(0)+s\int_\Lambda h^{\#}\,d\pi .
\end{equation}
Next, for $\pi$-a.e.\ $X$ one has $X = \bar\nu_X\in\Lambda$, so convexity of $h^{\#}$ at such points
$X$ gives $h^{\#}(X) = \rho_X(\bar\nu_X) = \la\nu_X,\rho_X\ra\le\la\nu_X,h^{\#}\ra$, having used that $\rho_X$ is linear function, smaller than $h^{\#}$ and agreeing at $X\in \Lambda$. Integrating in
$\pi$ and using $\mu = \int\nu_X\,d\pi$,
\begin{equation}\label{eq:step2}
    \int_\Lambda h^{\#}\,d\pi \ \le \ \int \la\nu_X,h^{\#}\ra\, d\pi(X)\ = \ \la\mu,h^{\#}\ra .
\end{equation}
Finally $\bar\pi = \int X\,d\pi(X) = \int\bar\nu_X\,d\pi(X) = \bar\mu$, and \eqref{eq:step1} together
with \eqref{eq:step2} is \eqref{eq:jensen}.
\end{proof}

\begin{corollary}\label{cor:approx}
Let $\bnu$ be as in \eqref{eq:choice}, with $\lambda$ and $\{\mu_x\}$ as in Theorem~\ref{thm:main}. Then $\bnu$ is a divergence-free generalized Young measure,
and there exists a divergence-free sequence $(w_k)_k \subset (C^1 \cap L^1)(\Omega;\X)$ with 
\[
w_k\toY\bnu .
\]
\end{corollary}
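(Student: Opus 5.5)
We will verify the hypotheses of the characterization theorem for divergence-free generalized Young measures from \cite{ArroyoRabasa20}, applied to $\bnu=(\delta_0,\lambda,\mu_x)$. Once they hold, that theorem produces a divergence-free sequence generating $\bnu$. A standard mollification/diagonal step, or the smoothness already provided in \cite{ArroyoRabasa20}, then upgrades it to $(C^1\cap L^1)(\Omega;\X)$.

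For $\mathcal A=\Div$ the characterization takes the following form. A generalized Young measure $\bnu$ with $\lambda(\partial\Omega)=0$ is generated by $\Div$-free fields if and only if two conditions hold. First, the barycenter $\bar\bnu$ is divergence-free in $\mathcal D'(\Omega;\R^m)$. Second, a Jensen-type inequality holds for every $\Div$-quasiconvex $h$ of linear growth, at $\Leb^d$-a.e.\ $x$ for the regular part and at $\lambda^s$-a.e.\ $x$ for the singular part:
\[
 h\Bigl(\la\nu_x,\mathrm{id}\ra+\bar\nu^\infty_x\tfrac{d\lambda}{d\Leb^d}(x)\Bigr)\le\la\nu_x,h\ra+\la\nu^\infty_x,h^{\#}\ra\tfrac{d\lambda}{d\Leb^d}(x),
 \qquad h^{\#}(\bar\nu^\infty_x)\le\la\nu^\infty_x,h^{\#}\ra .
\]
The boundary normalization $\lambda(\partial\Omega)=0$ is assumed in Theorem~\ref{thm:main}, and the integrability $\int\!\int|A|\,d\delta_0\,dx=0$ is trivial.

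The barycenter condition is immediate. Since $\nu_x=\delta_0$, we have $\bar\bnu=\bar\mu_x\,\lambda$, which is divergence-free by hypothesis (i).

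For the regular points we fix $x$ with $\mu_x$ a $\Lambda$-superposition, which by hypothesis (ii) holds for $\lambda$-a.e.\ $x$ and hence for $\lambda^{ac}$-a.e.\ $x$. We then apply Lemma~\ref{lem:jensen} with $s=\lambda^{ac}(x)\ge0$. This gives exactly $h(s\bar\mu_x)\le h(0)+s\la\mu_x,h^{\#}\ra$, which is the regular inequality with $\nu_x=\delta_0$. At points where $\lambda^{ac}(x)=0$ the inequality is trivial, so the $\lambda^{ac}$-null exceptional set does not matter.

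For the singular points we need $h^{\#}(\bar\mu_x)\le\la\mu_x,h^{\#}\ra$ for $\lambda^s$-a.e.\ $x$. We obtain it by dividing \eqref{eq:jensen} by $s$ and letting $s\to\infty$. The term $h(0)/s$ vanishes, and $\limsup_s h(s\bar\mu_x)/s=h^{\#}(\bar\mu_x)$ by definition of the upper recession function. Again hypothesis (ii) supplies this on a set of full $\lambda$-measure, hence of full $\lambda^s$-measure.

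The main obstacle is bookkeeping rather than analysis. We must match exactly the form of the conditions in \cite{ArroyoRabasa20}: which class of integrands is tested, the use of $h^{\#}$ rather than $h^\infty$, and whether $\Omega$ needs Lipschitz boundary or the concentration must avoid $\partial\Omega$. The hypothesis $\lambda(\partial\Omega)=0$ handles the last point. If a Lipschitz domain is required, we would exhaust $\Omega$ by smooth subdomains, generate on each, and pass to a diagonal sequence using metrizability of bounded sets of Young measures. Finally, we pass from $L^1$ to $C^1$ generating fields by convolving each generating field with a mollifier at a scale small enough that, by diagonalization, the convolution preserves the divergence constraint in the interior and changes the Young measure by an arbitrarily small amount.
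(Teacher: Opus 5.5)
Your verification of the hypotheses of \cite[Thm.~1.1]{ArroyoRabasa20} follows the paper.
\begin{itemize}
\item The barycenter $\bar\mu_x\lambda$ is divergence-free by (i).
\item The regular Jensen inequality is Lemma~\ref{lem:jensen} with $s=\lambda^{ac}(x)$. It is trivially true where $\lambda^{ac}(x)=0$, which holds $\Leb^d$-a.e.\ on the $\lambda$-null exceptional set.
\item Your singular Jensen inequality, obtained by dividing \eqref{eq:jensen} by $s$ and letting $s\to\infty$, is correct but not needed. The paper treats the only remaining singular-point requirement as $\bar\mu_x$ being carried by $\Span\Lambda=\X$, which is vacuous.
\item No exhaustion of $\Omega$ is needed. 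The paper notes that the standing assumption $\vol(\partial\Omega)=0$ of \cite{ArroyoRabasa20} is dispensable here, because the Lebesgue part of $\bnu$ is $\Leb^d\restr\Omega$ and $\lambda(\partial\Omega)=0$.
\item Your exhaustion fallback would not work anyway. A field divergence-free on $\Omega'\Subset\Omega$ is not divergence-free in $\mathcal D'(\Omega)$ after extension by zero, unless it is compactly supported in $\Omega'$.
\end{itemize}

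The step that fails as written is the final upgrade by mollification.

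First, the characterization (through \cite[Cor.~1.1]{ArroyoRabasa20}) gives divergence-free \emph{measures} $\mu_j$ of bounded variation generating $\bnu$, not functions.

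Second, convolution cannot turn these into divergence-free $C^1$ fields on $\Omega$. With a mollifier $\rho_\varepsilon$, the field $\mu_j*\rho_\varepsilon$ is defined and divergence-free only on $\{\dist(\cdot,\partial\Omega)>\varepsilon\}$. Any cut-off $\chi$ used to extend it to the boundary layer sheds the load $\Div(\chi V)=V\nabla\chi$ there. Removing that load while keeping the field smooth and divergence-free up to $\partial\Omega$ is the content of a smooth-approximation theorem, not a routine step.

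What is needed is the area-strict density of smooth divergence-free fields in divergence-free measures on $\Omega$, \cite[Thm.~1.3 and Rmk.~1.3]{ArroyoRabasa20}. For each $j$ it gives $C^l$ divergence-free fields $w_{j,k}$ with $w_{j,k}\Leb^d\toweakstar\mu_j$ and $\mathrm{Area}(w_{j,k},\Omega)\to\mathrm{Area}(\mu_j,\Omega)$. Area-strict convergence, not merely weak-$*$ convergence, is what guarantees that $(w_{j,k})_k$ generates the same Young measure as $\mu_j$. A diagonal argument using metrizability of bounded sets of Young measures then yields $w_{j(k),k}\toY\bnu$.

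Your parenthetical ``or the smoothness already provided in \cite{ArroyoRabasa20}'' points at exactly this, and with it your proof coincides with the paper's. The mollification route should be dropped.
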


\begin{proof}
The barycenter $\bar\bnu = \bar \mu_x\,\lambda$ of $\bnu$ is divergence-free by (i) in Remark \ref{rem:localization}, and the concentration measure satisfies $\lambda(\partial\Omega) = 0$. (The theory of \cite{ArroyoRabasa20} is developed for bounded open sets with $\vol(\partial\Omega)=0$. This assumption is unnecessary here, since the Lebesgue part of $\bnu$ is $\Leb^d\restr\Omega$ and $\lambda(\partial\Omega)=0$.) Let $\lambda^{ac}$ denote the density of the absolutely continuous part of $\lambda$. The Jensen inequality required by \cite[Thm.~1.1]{ArroyoRabasa20} at $\Leb^d$-a.e.\ $x\in\Omega$ reads
\[
    h\bigl(\lambda^{ac}(x)\,\bar\mu_x\bigr)\ \le\ h(0)+\lambda^{ac}(x)\,\la\mu_x,h^{\#}\ra
\]
for every $\Div$-quasiconvex $h$ with linear growth. This is trivial on points where $\lambda^{ac}(x)=0$, and elsewhere it is \eqref{eq:jensen} with $s=\lambda^{ac}(x)$, because the set of points with $\lambda^{ac}(x)>0$ at which $\mu_x$ is not a $\Lambda$-superposition is $\lambda$-negligible, hence $\Leb^d$-negligible. The remaining condition of \cite[Thm.~1.1]{ArroyoRabasa20} is that $\bar\mu_x$ be carried by $\Span\Lambda=\X$ at $\lambda^s$-a.e.\ $x$, which is automatic. Thus, $\bnu$ is a divergence-free generalized Young measure and, by
\cite[Cor.~1.1]{ArroyoRabasa20}, is generated by divergence-free $\mu_j\in\Meas(\Omega;\X)$ with
uniformly bounded variation. For fixed $j$, \cite[Thm.~1.3]{ArroyoRabasa20} gives a divergence-free
$(w_{j,k})_k \subset L^1(\Omega;\X)$ with $w_{j,k}\Leb^d\toweakstar\mu_j$ and
$\mathrm{Area}(w_{j,k},\Omega)\to\mathrm{Area}(\mu_j,\Omega)$; as
$|\mu|(\Omega)\le\mathrm{Area}(\mu,\Omega)\le\vol(\Omega)+|\mu|(\Omega)$, both families are bounded
in variation. Since bounded sets of Young measures are metrizable, a diagonal argument yields a
subsequence with $w_{j(k),k}\toY\bnu$. That $(w_k)_k$ may be taken of class $C^1$ follows from~\cite[Thm.~1.3 and Rmk.~1.3]{ArroyoRabasa20} (the recovery sequence of that theorem can be taken of class $C^l$ for any $l$).
\end{proof}
Everything is thus reduced to the following statement, which is where the constraint has to be
repaired and which occupies the rest of the note.
\begin{lemma}[Modification lemma]\label{lem:modification}
Let $\Omega\subset\R^d$ be bounded, connected and open and let $(V_n)_n\subset C^1\cap L^1(\Omega;\X)$ satisfy
$\Div V_n = 0$ in $\mathcal D'(\Omega)$, $V_n\to0$ in measure, and $\int_\Omega|V_n|\le C$. Then
there are divergence-free fields $(U_n)_n\subset L^1(\Omega;\X)$ and open sets $A_n \subset \Omega$ such that
\[
    \|V_n-U_n\|_{L^1(\Omega)}\to0, \qquad \spt(U_n) \subset A_n,\qquad \vol\bigl(A_n\bigr)\to0 .
\]
Furthermore, we can also require that $\partial \Omega \subset \partial A_n$ and that $\partial_\Omega A_n$ is  a compact smooth manifold for all $n\in\mathbb N$.
\end{lemma}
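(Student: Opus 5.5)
The plan is to truncate $V_n$ to a set where it is large, and then repair the divergence row by row with Lemma~\ref{lem:filling}, using its sharper \emph{support} version.

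\textbf{Truncation.} Fix $n$. Since $V_n\to0$ in measure and $\|V_n\|_{L^1}\le C$, I choose thresholds $t_n\to0$ slowly enough that $\vol(\{|V_n|>t_n\})\to0$; the discarded part then has $\int_{\{|V_n|\le t_n\}}|V_n|\le t_n\vol(\Omega)\to0$. I also need compact support, so I choose compact $K_n\Subset\Omega$ with $\int_{\Omega\setminus K_n}|V_n|\le 1/n$. Since $V_n\in C^1$, for a.e.\ level $s\in(t_n,2t_n)$ the set $\{|V_n|>s\}$ has smooth boundary, by Sard applied to $|V_n|^2$. To keep things smooth and compactly supported, I instead take a smooth cutoff $\chi_n\in C^\infty_c(\Omega;[0,1])$ of the form $\chi_n=\psi(|V_n|^2)\,\zeta_n$, with $\zeta_n$ a cutoff equal to $1$ on $K_n$. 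Then I set $\tilde V_n\coloneqq\chi_nV_n$. This field is $C^1$, compactly supported, satisfies $\|V_n-\tilde V_n\|_{L^1}\to0$, and $\vol(\spt\tilde V_n)\to0$. Its load is $f_n=\Div\tilde V_n=V_n\nabla\chi_n$, which is a continuous function supported on a set of small volume. This is not yet a measure on a null set, so the second part of Lemma~\ref{lem:filling} does not apply directly.

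\textbf{Load estimate.} By duality, for $u$ $1$-Lipschitz in the geodesic distance,
\[
\la f_n,u\ra=-\int\tilde V_n\nabla u\,dx=\int(1-\chi_n)V_n\nabla u\,dx,
\]
where I used $\Div V_n=0$ to move the mass to the complement. Passing the divergence-free condition over the complement requires a localization argument, because $u$ is not compactly supported. I will handle this by subtracting a constant, since $\la f_n,1\ra=0$, and noting that $u$ is bounded on the compact support. This gives $\fl{f_n}\le\int(1-\chi_n)|V_n|\to0$, row by row.

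\textbf{Filling with small support (main obstacle).} The load $f_n$ is not on a null set. My first attempt is to replace $\chi_n$ by a sharp indicator of a smooth superlevel set $E_n=\{|V_n|>s\}\cap\{\zeta_n>1/2\}$, with $s$ a regular value so that $\partial E_n$ is a smooth compact hypersurface. Then $\Div(V_n\mathds 1_{E_n})=-V_n\nu_{E_n}\Haus^{d-1}\restr\partial E_n$ is a measure on a compact Lebesgue-null set. The KR estimate above still holds, so Lemma~\ref{lem:filling} yields $W_n$ with $\Div W_n=\Div(V_n\mathds 1_{E_n})$, $\|W_n\|_{L^1}\le \fl{\cdot}+1/n\to0$, and $\vol(\spt W_n)\le1/n$.

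\textbf{Conclusion and regularity.} I set $U_n\coloneqq V_n\mathds 1_{E_n}-W_n$. This field is divergence-free, $L^1$-close to $V_n$, and supported in $\overline{E_n}\cup\spt W_n$, which has small volume. For the regularity of the support, I take $A_n$ to be $\Omega$ minus a smooth compact domain $D_n\subset\Omega\setminus(\overline{E_n}\cup\spt W_n)$. I choose $D_n$ to have almost full measure, for instance a smooth approximation from inside of the complement of a small open neighbourhood of the compact set $\overline{E_n}\cup\spt W_n$, together with a collar near $\partial\Omega$ of small volume. Then $\partial_\Omega A_n=\partial D_n$ is a smooth compact manifold and $\partial\Omega\subset\partial A_n$. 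The delicate point is making sure $\spt W_n$ is compact in $\Omega$, or at least of small volume near $\partial\Omega$; I expect Lemma~\ref{lem:filling} to allow this by construction. If it does not, the collar near $\partial\Omega$ absorbs that part, since only $\spt U_n\subset A_n$ with $A_n$ open is required.
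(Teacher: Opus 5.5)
\textbf{The load estimate is where your argument breaks.} The bound $\fl{f_n}\le\int(1-\chi_n)|V_n|$ would need $\int_\Omega V_n\cdot\nabla u\,dx=0$ for every $d_\Omega$-$1$-Lipschitz $u$. But $\Div V_n=0$ in $\mathcal D'(\Omega)$ gives this only for compactly supported $u$. For general $u$ the integral is, formally, the flux of $V_n$ through $\partial\Omega$ weighted by $u$, and subtracting a constant does not remove it.

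Your kept field $\chi_nV_n$ is compactly supported, and so is $V_n\mathds 1_{E_n}$ with $E_n\subset\{\zeta_n>1/2\}$. So the discarded part contains the whole collar at $\partial\Omega$, and the charge it sheds there may have to be carried across $\Omega$. A concrete example: on $\Omega=(-1,1)^2$ let $V_n(x)\coloneqq n\phi(nx_2)e_1$ with $0\le\phi\in C^\infty_c((-\frac12,\frac12))$ and $\int\phi=\frac12$. These fields are smooth, divergence-free, of unit mass, and tend to zero in measure. Yet testing with $u(x)=-x_1$ gives
\[
\la f_n,u\ra=\int\chi_nV_n\cdot e_1\,dx\ \ge\ 1-\int(1-\chi_n)|V_n|\,dx\ \to\ 1 .
\]
By \eqref{eq:apriori}, every compactly supported $W$ with $\Div W=f_n$ has $\|W\|_{L^1}\ge\fl{f_n}$. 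So the filling given by Lemma~\ref{lem:filling} has mass close to $1$, and $U_n$ is not $L^1$-close to $V_n$.

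The repair, which is the key point of the paper's proof, is to reverse the role of the collar. Take smooth $\Omega_n\Subset\Omega$ with $\vol(\Omega\setminus\Omega_n)\to0$ and \emph{keep} $V_n$ on $\Omega\setminus\Omega_n$: that is, keep $S_n=V_n\mathds 1_{\{|V_n|\ge s\}\cup(\Omega\setminus\Omega_n)}$ and discard $E_n=V_n\mathds 1_{\Omega_n\cap\{|V_n|<s\}}$. The discarded part is now compactly supported, so \eqref{eq:apriori} gives $\fl{\Div S_n}=\fl{\Div E_n}\le\|E_n\|_{L^1}\to0$. The kept set still has small volume, and the fact that its support reaches $\partial\Omega$ matches the requirement $\partial\Omega\subset\partial A_n$. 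The rest of your plan then works as in the paper: the support version of Lemma~\ref{lem:filling}, followed by a smooth inner approximation of the complement of the support.

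A secondary point: Sard's theorem is not available for $|V_n|^2\in C^1$ when $d\ge2$, and an intersection of smooth sets need not have smooth boundary. You only need the load to be a finite measure on a compact Lebesgue-null set. The coarea formula provides this for a.e.\ level, as in Proposition~\ref{prop:truncation}.
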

Indeed, assuming the above statement, our main result follows.
\begin{proof}[Proof of Theorem~\ref{thm:main}]
Corollary~\ref{cor:approx} provides divergence-free $w_k\in L^1(\Omega;\X)$, bounded in $L^1$, with
$w_k\toY\bnu$; by Lemma~\ref{lem:theta} they converge to zero in measure.
Lemma~\ref{lem:modification} replaces them by divergence-free $U_k$ with $\|w_k-U_k\|_{L^1}\to0$ and $\vol(A_k)\to0$.

Notice that such a modification does not change the generated Young measure
\cite[Prop.~4.1]{ArroyoRabasa20}, so $U_k\toY\bnu$ and
$\bTheta_{U_k}\toweakstar\lambda\otimes\mu_x$ by Lemma~\ref{lem:theta} again, which also gives $\|U_k\|_{L^1}\to\lambda(\Ob) = 1$,
so $U_k\ne0$ for $k$ large and $V_k\coloneqq U_k/\|U_k\|_{L^1}$ is well defined, divergence-free and
supported where $U_k$ is. Since $\bTheta_V$ is positively $1$-homogeneous in $V$, we still get
$\bTheta_{V_k}\toweakstar\lambda\otimes\mu_x$, and $(V_k)_k$ satisfies \eqref{eq:vanishing}. Finally, the last conclusion is obtained by the last conclusion of Lemma \ref{lem:modification}.
\end{proof}

%=====================================================================
\section{Filling by currents}\label{sec:filling}
%=====================================================================

The goal of this section is to show Lemma \ref{lem:modification}. Throughout this section the fields are vector valued ($m=1$). The matrix case is obtained by treating the
$m$ rows separately.

\subsection{Truncation}

The elementary first step is that almost all of the $L^1$ mass of a sequence converging to zero in
measure already sits on a set of small volume.

\begin{proposition}\label{prop:truncation}
Let $(v_n)_n\subset C^1(\Omega;\X)$ satisfy $v_n\to0$ in measure and $\int_\Omega|v_n|\le C<\infty$.
Then there are sets $A_n\subset\Omega$ and real numbers $t(n)\to\infty$ such that, for all
$n$ sufficiently large,
\[
    \|v_n-v_n\mathds 1_{A_n}\|_{L^1}\le t(n)^{-1},
    \qquad |v_n|\ge t(n)\ \text{on }A_n,
    \qquad \vol(A_n)\le C\,t(n)^{-1}.
\]
Moreover, the sets $A_n$ can be chosen relatively closed in $\Omega$, of locally finite
perimeter in $\Omega$, and with $\vol(\partial_\Omega A_n)=0$.
\end{proposition}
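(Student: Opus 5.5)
The plan is to take $A_n$ to be a superlevel set $\{|v_n|\ge t\}$ for a carefully chosen level $t=t(n)$. The level has to be large enough that the set is small, yet slow enough in $n$ that the mass left outside it, below level $t$, is negligible.

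\emph{Step 1: choice of $t(n)$.} For fixed $t>0$ write
\[
\int_\Omega |v_n|\mathds 1_{\{|v_n|<t\}}\,dx \ \le\ \varepsilon\,\vol(\Omega)+t\,\vol(\{|v_n|>\varepsilon\})
\]
for any $\varepsilon\in(0,t)$. Convergence in measure then gives
\[
\limsup_n\int_{\{|v_n|<t\}}|v_n|\,dx\ \le\ \varepsilon\vol(\Omega),
\]
and letting $\varepsilon\to0$ we get $\lim_n\int_{\{|v_n|<t\}}|v_n|=0$ for every fixed $t$. A standard diagonal choice follows. Pick $n_1<n_2<\dots$ such that $\int_{\{|v_n|<k+1\}}|v_n|\le 1/(k+1)$ for all $n\ge n_k$, and set $t(n)=k$ for $n_k\le n<n_{k+1}$. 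Then $t(n)\to\infty$ and $\int_{\{|v_n|<s\}}|v_n|\le 1/t(n)$ for every $s\le t(n)+1$. Keeping this slack interval $[t(n),t(n)+1]$ of admissible levels is deliberate, since it is needed in Step 2.

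\emph{Step 2: regularity of $A_n$ via coarea.} Put $A_n=\{|v_n|\ge s_n\}$ with $s_n\in[t(n),t(n)+1]$ to be chosen. Then $|v_n|\ge s_n\ge t(n)$ on $A_n$. Chebyshev gives $\vol(A_n)\le C/s_n\le C/t(n)$, and $\|v_n-v_n\mathds 1_{A_n}\|_{L^1}=\int_{\{|v_n|<s_n\}}|v_n|\le 1/t(n)$. Since $|v_n|$ is continuous, $A_n$ is relatively closed in $\Omega$. One could finally rename $t(n)$ as $s_n$ so that all three inequalities hold with the same parameter; they remain valid.

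The remaining issue, and the only real obstacle, is the choice of $s_n$. The function $u=|v_n|$ is only locally Lipschitz, not $C^1$, because of the zeros of $v_n$, but that is harmless for levels $s>0$. The boundary satisfies $\partial_\Omega A_n\subset\{u=s_n\}$, and we need $\vol(\{u=s\})=0$ together with $\{u\ge s\}$ having locally finite perimeter. Both hold for all but countably many $s$, respectively for a.e.\ $s$:
\begin{itemize}
\item the level sets $\{u=s\}$ are pairwise disjoint and $\vol(\Omega)<\infty$, so only countably many of them can have positive measure;
\item by the coarea formula for the locally Lipschitz $u$ on each $\Omega'\Subset\Omega$, $\int_0^\infty P(\{u>s\},\Omega')\,ds=\int_{\Omega'}|\nabla u|<\infty$, so $P(\{u>s\},\Omega')<\infty$ for a.e.\ $s$;
\item $\{u\ge s\}$ and $\{u>s\}$ differ by the null set $\{u=s\}$, so they have the same perimeter.
\end{itemize}
To handle all $\Omega'$ at once, take an exhaustion $\Omega'_j\uparrow\Omega$ and intersect the countably many full-measure sets of good levels. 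Any good $s_n$ in $[t(n),t(n)+1]$ then works.
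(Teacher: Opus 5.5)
Your proposal is correct and follows essentially the same route as the paper. Both arguments take superlevel sets $\{|v_n|\ge s\}$, with the level chosen by a diagonal argument from $\int_{\{|v_n|<j\}}|v_n|\to0$, and use Chebyshev for the volume bound. Both then pick the level inside a unit window, avoiding the countably many level sets of positive measure and the null set of levels where coarea on an exhaustion fails to give locally finite perimeter. The differences are cosmetic: you prove the vanishing of the low-level mass by an explicit $\varepsilon$-splitting rather than by dominated convergence, and you keep the integer threshold $t(n)$ separate from the actual level $s_n$, which the paper merges into a single $t(n)\in[j,j+1)$.
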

\begin{proof}
Fix $j\in\mathbb N$. Since $v_n\to0$ in measure and $|v_n\mathds 1_{\{|v_n|<j\}}|\le j\in L^1(\Omega)$,
dominated convergence gives $\int_{\{|v_n|<j\}}|v_n|\to0$ as $n\to\infty$. Choose
$N_1<N_2<\cdots$ with $\int_{\{|v_n|<j+1\}}|v_n|\le\frac1{j+1}$ for all $n\ge N_j$, and set
$A_n\coloneqq\{|v_n|\ge t(n)\}$ with $t(n)\in[j,j+1)$ for $N_j\le n<N_{j+1}$. Set 
$t(n)\in[1,2)$ for $n<N_1$, so that $t(n)\to\infty$. The precise value of $t(n)$ is fixed
below. For $N_j\le n<N_{j+1}$ the first assertion follows from
\[
    \|v_n-v_n\mathds 1_{A_n}\|_{L^1}=\int_{\{|v_n|<t(n)\}}|v_n|
    \le\int_{\{|v_n|<j+1\}}|v_n|\le\frac1{j+1}\le t(n)^{-1}.
\]
Finally, $A_n$ is relatively closed in $\Omega$ because $|v_n|$ is continuous, and
$\partial_\Omega A_n\subset\{|v_n|=t(n)\}$. Since the level sets $\{|v_n|=t\}$ are pairwise
disjoint (here we use continuity), $\vol(\{|v_n|=t\})>0$ for at most countably many $t$. Since $|v_n|$ is
locally Lipschitz, the coarea formula applied on an exhaustion of $\Omega$ shows that
$\{|v_n|>t\}$ has locally finite perimeter in $\Omega$ for a.e.\ $t$. Choosing $t(n)$ in
the intervals above outside these two exceptional sets, we get $\vol(\partial_\Omega A_n)=0$. In particular, $A_n$ differs from $\{|v_n|>t(n)\}$ by a null set, hence it has locally finite perimeter
in $\Omega$.
\end{proof}
\subsection{The Kantorovich--Rubinstein norm}\label{ssec:KR}
In this section, we shall introduce the Kantorovich-Rubinstein norm that, for our scope, will measure the cost of the filling in Lemma \ref{lem:filling}. We first introduce the general metric space set-up and introduce the so-called Arens-Eells space (we refer to \cite{Weaver18} for a modern treatment). Given a metric space $(X,d)$, the Arens-Eells space $\textnormal{\AE}((X,d))$ is the completion of the set of molecules
\[
    q = \sum_{i=1}^N \lambda_i(\delta_{a_i}-\delta_{b_i}),\qquad \lambda_i \in \R, a_i,b_i \in X, N\in \mathbb N,
\]
with respect to the norm
\[
    \rho(q) :=  \inf\left\{ \sum_i|\lambda_i|d(a_i,b_i) \ : \ q = \sum \lambda_i(\delta_{a_i}-\delta_{b_i})\right\},
\]
where the infimum is taken among all possible representations of the molecule $q$. There is a natural coupling between a Lipschitz function $u :  X \to \R$ and a molecule $q \in \textnormal{\AE}((X,d))$, defined by
\begin{equation}
    u \mapsto \langle u,q\rangle := \int u \, d q = \sum_i \lambda_i ( u(a_i)-u(b_i)),\label{eq:pairing}
\end{equation}
which is well-defined and independent of the representation of $q$. In fact, it is a linear functional that extends as an isometry between the dual of $\textnormal{\AE}((X,d))$ (as Banach space with $\rho$) and the space of Lipschitz functions modulo constants; the space $\textnormal{\AE}((X,d))$ itself is also known as the Lipschitz-free space of $X$. Arens and Eells (\cite{ArensEells56}) proved that $\text{\AE}((X,d))$ is a predual of $\Lip_0(X)$, the space of all Lipschitz maps $u: X \to \R$ on $X$ vanishing at a special point $x_o \in X$, endowed with the (semi)norm
\[
    \Lip(u) \coloneqq \sup_{x \neq y} \frac{|u(x) - u(y)|}{d(x,y)} \, .
\]
The fact that $\text{\AE}((X,d))$ is the completion of dipoles, which are measures with zero average, conveys that every $f \in \text{\AE}((X,d))$ can be extended uniquely to a continuous linear form (still denoted $f$) on the whole of $\Lip(X)$ satisfying 
\[
    \la f, u\ra \le C \Lip(u), \qquad \la f,1\ra = 0. 
\]
The least constant $C>0$ above is given by the \emph{Kantorovich--Rubinstein norm}
\[
\rho(f) = \|f\|_\text{KR} = \sup_{u \in \Lip_1(X)} \la f,u\ra,
\]
where $\Lip_1(X)$ is the set of all Lipschitz functions with $\Lip(u) \le 1$. Any $\mu \in \mathcal M(X)$ with $\mu(X)=0$ and finite first moment can be identified with an element of $\text{\AE}(X)$ and so its KR-norm  can be written as
\[
\|\mu\|_{\mathrm{KR}} = W_1(\mu^+,\mu^-) = \sup_{u \in \Lip_1(X)}  \int u\, d\mu 
\]
where $W_1(\mu^-,\mu^+)$ denotes the Wasserstein-$1$ distance from $\mu^-$ to $\mu^+$, where $\mu = \mu^+ - \mu^-$ is the Hahn decomposition of $\mu$. We refer to \cite[Ch.\ 3]{Weaver18} for a complete discussion.

We next specialize this discussion to the metric space given by an open connected set \(\Omega\), endowed with the intrinsic distance. The cost of a filling has to be measured \emph{inside} $\Omega$, accordingly, let
\begin{equation}\label{eq:geod}
    d_\Omega(a,b) \ \coloneqq\ \inf\bigl\{\ \operatorname{length}(\gamma)\ :\
    \gamma\text{ a Lipschitz path in }\Omega\text{ from }a\text{ to }b\ \bigr\} .
\end{equation}
be the intrinsic distance of $\Omega$.

\begin{remark}\label{rem:polygonal} Polygonal paths suffice in \eqref{eq:geod}: a Lipschitz path
$\gamma$ in $\Omega$ is a compact subset of $\Omega$, hence at some positive distance $\varrho$ from
$\partial\Omega$, and the polygon inscribed in $\gamma$ along a partition fine enough that
consecutive vertices are less than $\varrho$ apart lies in $\Omega$ -- each of its segments lies in
a ball of radius $\varrho$ centred on (the image of) $\gamma$; it is not longer than $\gamma$.
\end{remark}

Notice that
$d_\Omega$ agrees with $|\cdot|$ on every ball contained in $\Omega$. This has three
consequences. First, $d_\Omega$ is a finite distance  thanks to the connectedness of $\Omega$. Indeed for any $a\in \Omega$, the set of points joined to $a$ by a polygonal path in $\Omega$ contains $a$, is open and its complement in
$\Omega$ is open as well. Second, together
with the triangle inequality, $d_\Omega$ is continuous on $\Omega\times\Omega$ and induces the
Euclidean topology. Third, $d_\Omega$ is bounded on $K\times K$ for every compact $K\Subset\Omega$ by continuity, although it need not be bounded on
$\Omega\times\Omega$: a bounded open set may
spiral so as to have infinite intrinsic diameter. Only the bound on compact sets will be used.
For $u:\Omega\to\R$ we write
\[
\Lip_{d_\Omega}(u) \ \coloneqq\ \sup \ \left\{ \ \frac{|u(x)-u(y)|}{d_\Omega(x,y)} \ : \  x\ne y\ \right\}
\]
for
its Lipschitz constant with respect to $d_\Omega$, the quotient being $0$ when
$d_\Omega(x,y) = +\infty$. Notice that a locally Lipschitz $u$ has $\Lip_{d_\Omega}(u)\le1$ if and only if
$|\nabla u|\le1$ almost everywhere. This follows immediately from the fact that $d_\Omega$ and the Euclidean metric coincide on sufficiently small balls in $\Omega$ and $d_\Omega(x,y)\ge |x-y|.$

Let now $f = \Div v$ be a load, with $v\in L^1(\Omega;\R^d)$ compactly supported. We define its Kantorovich--Rubinstein norm relative to $\Omega$ by
\begin{equation}\label{eq:KR}
    \fl{f} \ = \ \sup\bigl\{\la f,u\ra\ :\ u:\Omega\to\R,\ \Lip_{d_\Omega}(u)\le1\bigr\} ,
\end{equation}
and notice that
\[
 \la f,u\ra \ = \ -\int_\Omega v\cdot\nabla u\,dx \,.
\]
The integral being well defined by Rademacher's theorem, and by the assumption that $\spt v \Subset \Omega$: it depends on $f$ only, since if $\Div v=\Div v'$ then $\int_\Omega (v-v')\cdot\nabla u\,dx=\int_\Omega (v-v')\cdot\nabla(\chi u)\,dx=0$ for any $\chi\in C^\infty_c(\Omega)$ with $\chi=1$ near $\spt v\cup\spt v'$. Proposition~\ref{prop:AE} below shows that $f$ can be identified with an element of $\text{\AE}((K,d_\Omega))$ for a compact $K\Subset\Omega$, and that \eqref{eq:KR} is its Arens--Eells norm. For a finite measure $\mu$ on $\Omega$ with compact support and $\mu(\Omega)=0$ we write accordingly $\fl{\mu}\coloneqq\sup\{\int u\,d\mu : \Lip_{d_\Omega}(u)\le1\}$, which is finite and consistent with \eqref{eq:KR} when $\mu=\Div v$. This is the meaning of $\fl{\cdot}$ in \eqref{eq:dipole} and in Lemma~\ref{lem:refine}.

Note that $\la f,1\ra = 0$, which means that $f$ has zero average in $\Omega$. The
constraint $|\nabla u|\le1$ gives the \emph{a priori bound}
\begin{equation}\label{eq:apriori}
    \fl{\Div v} \ \le\ \int_\Omega|v|\,dx .
\end{equation}

For $a\ne b$ {in $\R^d$} let $\sigma_{a,b}\coloneqq\tau\,\Haus^1\restr\llbracket a,b\rrbracket$, with
$\tau = (b-a)/|b-a|$, be the unit field tangent to the segment, {regarded as a measure on the whole
of $\R^d$ (the segment is not required to lie in $\Omega$)}. A direct computation gives{, in
$\mathcal D'(\R^d)$,}
\begin{equation}\label{eq:segment}
    \Div\sigma_{a,b} \ = \ \delta_a-\delta_b ,
    \qquad |\sigma_{a,b}|(\R^d) \ = \ |b-a| .
\end{equation}
Concatenating segments along a polygonal path $\gamma\subset\Omega$ from $a$ to $b$ produces a field
$\sigma_\gamma$, now supported in $\Omega$. For this piecewise affine path it holds $\Div\sigma_\gamma = \delta_a-\delta_b$ {in
$\mathcal D'(\Omega)$} and $|\sigma_\gamma|({\Omega}) \le \operatorname{length}(\gamma)$, with equality unless some of its segments overlap with opposite orientations. Testing \eqref{eq:KR} with $u = -d_\Omega(a,\cdot)$, gives
\begin{equation}\label{eq:dipole}
    \fl{\delta_a-\delta_b} \ = \ d_\Omega(a,b) .
\end{equation}
Therefore, dipoles are the elementary loads, and their cost is the geodesic distance. That every
load of finite norm is a countable superposition of dipoles of almost optimal total cost is the
classical content of the Arens--Eells duality:

\begin{proposition}[Arens--Eells]\label{prop:AE}
Let $\Omega\subset\R^d$ be a bounded and connected open set, let $v\in L^1(\Omega;\R^d)$ have compact support in
$\Omega$, and let $f = \Div v$. Then $f\in\text{\AE}((\Omega,d_\Omega))$, and for every $\varepsilon>0$ there exist a compact set $K\Subset\Omega$, points $a_i,b_i\in K$ and weights ${\lambda_i}>0$, $i\in\mathbb N$, with
\[
    f \ = \ \sum_i{\lambda_i}\bigl(\delta_{a_i}-\delta_{b_i}\bigr) \quad\text{in }\mathcal D'(\Omega),
    \qquad \sum_i{\lambda_i}\,d_\Omega(a_i,b_i) \ \le\ \fl f+\varepsilon ,
\]
the first series converging in the norm \eqref{eq:KR}.
\end{proposition}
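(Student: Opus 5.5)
We reduce Proposition~\ref{prop:AE} to the abstract Arens--Eells theory on a compact metric space, after localizing the load to a compact set. Fix $\chi\in C^\infty_c(\Omega)$ with $\chi=1$ on a neighbourhood of $\spt v$, and set $K\coloneqq\spt\chi$, which is a compact subset of $\Omega$. By the computation after \eqref{eq:KR}, $\la f,u\ra=-\int v\cdot\nabla u$ depends only on the values of $u$ near $\spt v$. Hence $f$ defines a linear functional on $\Lip(K,d_\Omega)$, vanishing on constants, with $|\la f,u\ra|\le \|v\|_{L^1}\Lip_{d_\Omega}(u)$ by \eqref{eq:apriori}.

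The key step is to compare norms on $K$ and on $\Omega$. Every $d_\Omega$-$1$-Lipschitz function on $K$ extends to a $d_\Omega$-$1$-Lipschitz function on $\Omega$ by the McShane formula $\tilde u(x)=\inf_{y\in K}(u(y)+d_\Omega(x,y))$. This extension is finite because $d_\Omega$ is finite on $\Omega\times\Omega$. So the supremum \eqref{eq:KR} equals the supremum over $\Lip_1(K,d_\Omega)$, and $(K,d_\Omega)$ is a compact metric space, with $d_\Omega$ bounded on $K$. The pairing must be made intrinsic to $K$, i.e. defined for Lipschitz functions on $K$ rather than on $\Omega$; doing this via the McShane extension requires checking that $\la f,\tilde u\ra$ is independent of the extension. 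Independence holds because two extensions agree on $K\supset\spt v$, which is a neighbourhood of the support, so their gradients agree a.e. on $\spt v$.

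Next I would show that $f$ lies in the closure of molecules, i.e. in $\text{\AE}(K)$ rather than merely in $\Lip_0(K)^*$. The natural route is weak-$*$ continuity on bounded sets. If $u_k\to u$ uniformly on $K$ with $\Lip(u_k)\le1$, then $\nabla u_k\toweakstar\nabla u$ in $L^\infty$ on the interior of $K$, and since $v\in L^1$, $\la f,u_k\ra\to\la f,u\ra$. Functionals on $\Lip_0$ that are weak-$*$ continuous on bounded sets are exactly the elements of the predual $\text{\AE}$ (Banach--Dieudonné / Kalton's characterization, \cite[Ch.~3]{Weaver18}). Alternatively, I would mollify $v$, write $\Div(v*\rho_\epsilon)$ as a smooth zero-mean density (hence a $W_1$-limit of molecules), and use \eqref{eq:apriori} to get $\|f-\Div(v*\rho_\epsilon)\|_{\mathrm{KR}}\le\|v-v*\rho_\epsilon\|_{L^1}\to0$. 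This second route is more elementary and is the one I would use, with $K$ enlarged slightly so that it contains the supports of the mollifications.

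Finally, I would extract the series representation from the norm. Since $\rho$ is the Arens--Eells norm, choose molecules $q_k$ with $\rho(f-\sum_{j\le k}q_j)\le\varepsilon2^{-k}$, where $q_1$ is chosen with $\rho(q_1)\le\fl f+\varepsilon/2$ and each later $q_j$ has $\rho(q_j)\le 3\varepsilon 2^{-j}$. Each $q_j$ is then represented by a finite sum $\sum\lambda_i(\delta_{a_i}-\delta_{b_i})$ whose cost is within $\varepsilon2^{-j}$ of $\rho(q_j)$. Negative weights are turned positive by swapping $a_i$ and $b_i$. Concatenating these representations gives the series, with total cost at most $\fl f+C\varepsilon$, and the series converges in \eqref{eq:KR} because the tail norms are summable. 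Distributional equality then follows, since test functions in $C^\infty_c(\Omega)$ are $d_\Omega$-Lipschitz. I expect the main obstacle to be the membership in $\text{\AE}$, not merely in the dual.
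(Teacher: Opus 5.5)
Your proposal is correct and follows essentially the paper's proof. Like the paper, you localize to a compact $K$ via McShane so that \eqref{eq:KR} is the Arens--Eells norm of $(K,d_\Omega)$, mollify $v$ and use \eqref{eq:apriori} to reduce to smooth zero-mean densities, approximate these by molecules, and then extract the exact dipole series by the geometric iteration with summable tails. The one step you assert rather than prove is that a smooth zero-mean density on $K$ is a KR-limit of molecules; the paper proves it by partitioning $K$ into finitely many Borel pieces of $d_\Omega$-diameter at most $\varsigma$ and replacing the density on each piece by a Dirac mass of the same flux, at a cost of at most $\varsigma\|\Div v_j\|_{L^1}$.
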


\begin{proof} 
We subdivide the proof into two steps.

\emph{Step 1: $f\in\textnormal{\AE}((K,d_\Omega))$ for some compact $K\Subset\Omega$.} 
By \eqref{eq:dipole} {and Arens--Eells duality}, $\fl\cdot$ is the
Arens--Eells norm $\textnormal{\AE}((K,d_\Omega))$ built on the compact metric space $(X,d)=(K,d_\Omega)$ -- compact because $K$ is -- for every
compact $K\subset\Omega$.
Two remarks on this identification.
The distance $d_\Omega$ is restricted to $K$, but its paths still run in $\Omega$: nothing is asked of
$K$ beyond compactness -- it may well be totally disconnected -- and \eqref{eq:dipole} applies to
its points because they lie in $\Omega$. Moreover, every $d_\Omega$-$1$-Lipschitz function on $K$ is
the restriction of one on $\Omega$, by McShane's formula
$u\mapsto\inf_{y\in K}\bigl(u(y)+d_\Omega(\cdot,y)\bigr)$, so for a load supported in $K$ the supremum
in \eqref{eq:KR} coincides with the supremum over Lipschitz functions on $K$ that defines the dual
norm of $\textnormal{\AE}((K,d_\Omega))$.

Mollifying $v$ at a small scale, choose $v_j\in C^\infty_c(\Omega;\R^d)$ with $v_j\to v$ in $L^1$ and all supports in a fixed compact
$K\Subset\Omega$; by \eqref{eq:apriori}, $\fl{f-\Div v_j}\le\|v-v_j\|_{L^1}\to0$. By Gau\ss--Green,
$\Div v_j$ is a smooth compactly supported function of zero integral. Partition $K$ into finitely
many Borel pieces $\{{Q_k}\}_k$ of $d_\Omega$-diameter at most $\varsigma>0${, which is possible
because $(K,d_\Omega)$ is compact}, and replace $\Div v_j$ on each piece by a Dirac mass
${\lambda_{j,k}}\,\delta_{x_{j,k}}$ of the same total flux ${\lambda_{j,k}} = \int_{{Q_k}}\Div v_j\,dx$ at
a point $x_{j,k}\in {Q_k}$. The result $f_j = \sum_k{\lambda_{j,k}}\,\delta_{x_{j,k}}$ is a finitely
supported measure of zero total mass, hence it can be represented by a finite sum of weighted dipoles $\delta_a-\delta_b$
with endpoints {$a,b\in K$} (a molecule), hence 
$f_j\in\textnormal{\AE}(K)$ and $\rho(f_j) = \fl{f_j}$ by the Arens-Eells duality. Moreover $\fl{\Div v_j-f_j}\le\varsigma\|\Div v_j\|_{L^1}$, because a
$d_\Omega$-$1$-Lipschitz test function $u$ varies by at most $\varsigma$ on each element of the
partition:
\[
    \int_{{Q_k}}u\,{d(\Div v_j-f_j)}
    \ = \ \int_{{Q_k}}{\bigl(u-u(x_{j,k})\bigr)\Div v_j\,dx}
    \ \le\ \varsigma\int_{{Q_k}}|\Div v_j|\,dx .
\]
Choosing $\varsigma=\varsigma_j$ with $\varsigma_j\|\Div v_j\|_{L^1}\to0$ and letting $j\to\infty$, the molecules $f_j$ converge to $f$ in the norm \eqref{eq:KR}; hence $f\in\textnormal{\AE}((K,d_\Omega))$, with $\rho(f)=\fl f$. In particular, we also have $f\in\textnormal{\AE}((\Omega,d_\Omega))$.

\emph{Step 2: an exact decomposition.} Put $\varepsilon'\coloneqq\varepsilon/8$. By Step~1 there is a
finite sum of dipoles $q_1$ with $\fl{f-q_1}\le\varepsilon'/2$ and $\rho(q_1)\le\fl f+\varepsilon'/2$.
Applying the same to $f-q_1$ with $\varepsilon'/4$ in place of $\varepsilon'/2$, and iterating,
produces finite sums $q_j$ with $\fl{f-q_1-\cdots-q_j}\le2^{-j}\varepsilon'$ and, for $j\ge2$,
$\rho(q_j)\le\fl{f-q_1-\cdots-q_{j-1}}+2^{-j}\varepsilon'\le2^{-j+2}\varepsilon'$. Hence
$\sum_j\rho(q_j)\le\fl f+\varepsilon'/2+2\varepsilon'\le\fl f+\varepsilon$, and concatenating the
dipoles of the $q_j$ gives the assertion.
\end{proof}

\begin{remark}\label{rem:metric}
Everything up to here uses only that $(\Omega,d_\Omega)$ is a metric space, in the sense in which
Arens--Eells duality is developed in \cite{Weaver18}; the Euclidean structure of open sets in $\R^d$ enters only
in \S\ref{ssec:thicken}, which needs room around a segment for the tubes. Proposition~\ref{prop:AE}
is the elementary, polygonal face of a general statement: in a separable, piecewise quasiconvex
metric space with quasiconvexity constant $c\ge1$, every molecule $q$ bounds a rectifiable normal
$1$-current of mass at most $c(1+\delta)\|q\|_{\mathrm{KR}}$, $\delta\in(0,1)$
\cite[Lem.~4.1]{ArroyoRabasaBouchitte25}; here $(\Omega,d_\Omega)$ is a length space, so $c = 1$ and
the filling may be taken polygonal, as the thickening requires.
\end{remark}

The following is a refined version of the Arens--Eells lemma, tailored for measures supported on a compact set. The construction will be crucial in obtaining the existence of vanishing regular sequences.

\begin{lemma}\label{lem:refine}
Let $\Omega\subset\R^d$ be open and connected, let $K\subset\Omega$ be compact, and let
$\mu\in\Meas(K)$ be such that $\mu(K)=0$. Then for every
$\varepsilon>0$ there are points $a_i,b_i\in K$, weights
$\lambda_i\in\R\setminus\{0\}$ and polygonal paths $\gamma_i\subset\Omega$
from $a_i$ to $b_i$, $i\in\mathbb N$, such that
\begin{enumerate}
  \item\label{it:i} $\mu=\sum_i\lambda_i(\delta_{a_i}-\delta_{b_i})$ in
  $\mathcal D'(\Omega)$, the series converging absolutely in the norm
  \eqref{eq:KR};
  \item\label{it:ii} $\sum_i|\lambda_i|\,\ell(\gamma_i)\le\fl{\mu}+\varepsilon$;
  \item\label{it:iii} the set $L\coloneqq K\cup\bigcup_i\gamma_i$ is compact
  and $\vol(L)=\vol(K)$.
\end{enumerate}
\end{lemma}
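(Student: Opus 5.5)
The plan is to build the decomposition by a multiscale telescoping argument that refines Step~1 of Proposition~\ref{prop:AE}. The advantage is that every dipole beyond the first scale has endpoints in $K$ and is short, so its path stays in a thin neighbourhood of $K$. First, $\fl\mu\le \operatorname{diam}_{d_\Omega}(K)\,|\mu|(K)<\infty$, since $d_\Omega$ is bounded on $K\times K$.

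\emph{Setup.} Fix a sequence $\varsigma_j\downarrow0$ with $\sum_j\varsigma_j\,|\mu|(K)\le\varepsilon/4$. Using compactness of $(K,d_\Omega)$, choose \emph{nested} finite Borel partitions $\mathcal P_1\prec\mathcal P_2\prec\cdots$ of $K$, whose pieces at generation $j$ have $d_\Omega$-diameter at most $\varsigma_j$; take common refinements to get nesting. In each piece $Q$ pick a point $x_Q\in Q\subset K$, and set
\[
f_j\coloneqq\sum_{Q\in\mathcal P_j}\mu(Q)\,\delta_{x_Q}.
\]
As in Step~1 of Proposition~\ref{prop:AE}, $\fl{\mu-f_j}\le\varsigma_j|\mu|(K)\to0$. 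Then $\mu=f_1+\sum_{j\ge1}(f_{j+1}-f_j)$ holds in the norm \eqref{eq:KR}, and hence in $\mathcal D'(\Omega)$, because test functions are $d_\Omega$-Lipschitz.

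\emph{Step 1: the coarse part.} The molecule $f_1$ has zero total mass and is finitely supported in $K$. By Arens--Eells duality it has a representation $\sum_i\lambda_i(\delta_{a_i}-\delta_{b_i})$ with $\sum|\lambda_i|d_\Omega(a_i,b_i)\le\fl{f_1}+\varepsilon/4\le\fl\mu+\varsigma_1|\mu|(K)+\varepsilon/4$. By Remark~\ref{rem:polygonal}, each dipole gets a polygonal path $\gamma_i\subset\Omega$ with $\ell(\gamma_i)\le d_\Omega(a_i,b_i)+\eta_i$, where $\sum|\lambda_i|\eta_i\le\varepsilon/4$. These are finitely many paths.

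\emph{Step 2: the fine parts.} For $Q\in\mathcal P_j$, nesting gives $\mu(Q)=\sum_{Q'\subset Q,\,Q'\in\mathcal P_{j+1}}\mu(Q')$. Therefore
\[
f_{j+1}-f_j=\sum_{Q\in\mathcal P_j}\ \sum_{Q'\subset Q}\mu(Q')\bigl(\delta_{x_{Q'}}-\delta_{x_Q}\bigr).
\]
This is a finite sum of dipoles with endpoints in the same $Q$, so $d_\Omega(x_{Q'},x_Q)\le\varsigma_j$. Join each pair by a polygonal path of length at most $2\varsigma_j$. The cost of generation $j$ is then at most $2\varsigma_j\sum_{Q'}|\mu(Q')|\le2\varsigma_j|\mu|(K)$. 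Summing Steps 1 and 2 and discarding zero weights gives \ref{it:i} and \ref{it:ii}, with total cost $\le\fl\mu+\varepsilon$. Absolute convergence holds since $\sum|\lambda_i|d_\Omega(a_i,b_i)<\infty$.

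\emph{Step 3: compactness and volume.} This is the delicate point, and the reason for the multiscale construction. A generation-$j$ path starts in $K$ and has length at most $2\varsigma_j$. Since $|\cdot|\le d_\Omega$, it lies in the closed Euclidean $2\varsigma_j$-neighbourhood of $K$. Take a sequence in $L$. If infinitely many of its terms lie on paths of arbitrarily high generation, their distance to $K$ tends to $0$, so every limit point lies in $K$ because $K$ is closed. Otherwise a subsequence lies in a finite union of compact polygons plus $K$, which is compact. Hence $L$ is closed and bounded, so compact. Finally, $L\setminus K$ is contained in a countable union of segments, which is $\vol$-null because $d\ge2$. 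This gives $\vol(L)=\vol(K)$, i.e.\ \ref{it:iii}. The main obstacle I expect is exactly this compactness: a naive iteration as in Step~2 of Proposition~\ref{prop:AE} would produce long dipoles at every stage, whose paths could accumulate outside $K\cup\bigcup_i\gamma_i$. The nested-partition telescoping is what forces the late paths to shrink onto $K$.
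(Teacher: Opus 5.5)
Your proposal is correct and follows the paper's proof essentially step by step. The shared steps are nested finite partitions of $K$, the atomic approximations $f_j=\sum_Q\mu(Q)\delta_{x_Q}$, the parent--child telescoping of $f_{j+1}-f_j$, optimal transport only at the coarsest level, and compactness of $L$ because late-generation paths collapse onto $K$. The one difference is cosmetic: the paper takes Euclidean partitions with $\delta_1<r/2$, so each fine dipole is filled by the straight segment $\llbracket x_Q,x_{F(Q)}\rrbracket$ of length exactly $d_\Omega$, while you take $d_\Omega$-partitions and polygonal paths of length at most $2\varsigma_j$, which works equally well.

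There is one bookkeeping slip: as written, your total cost is $\fl\mu+\varsigma_1|\mu|(K)+\varepsilon\le\fl\mu+\tfrac54\varepsilon$, not $\fl\mu+\varepsilon$. You can fix this by using an exact optimal plan for the finite molecule $f_1$, as the paper does, or simply by shrinking the constants.
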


\begin{remark}
The points \ref{it:i}--\ref{it:ii} can be inferred from Proposition~\ref{prop:AE} for measures. The refinement is about their compatibility with \ref{it:iii}. The paths can be chosen to accumulate only on
$K$ avoiding, for example, density of the paths in $\Omega$. In particular, if $\vol(K)=0$, then $L$ is a compact negligible set,
which is what the last conclusion of Lemma~\ref{lem:filling} requires.
\end{remark}
\begin{proof}
We may assume $|\mu|(K)>0$ for otherwise the statement is trivial. Let $r\coloneqq\dist(K,\Omega^c) > 0$
and fix a decreasing sequence $\delta_m\downarrow0$ with
\begin{equation}\label{eq:deltachoice}
    \delta_1<\tfrac r2,\qquad
  \sum_{m\ge1}\delta_m\ \le\ \frac{\varepsilon}{4\,|\mu|(K)} .
\end{equation}
We will often use the following:
if $x\in K$ and $|y-x|<r$, then $\llbracket x,y\rrbracket\subset B_r(x)\subset\Omega$, hence
\begin{equation}\label{eq:equiv}
  d_\Omega(x,y)=|x-y| .
\end{equation}
Note also that every $\varphi\in C^\infty_c(\Omega)$ satisfies
$\Lip_{d_\Omega}(\varphi)\le\|\nabla\varphi\|_\infty$ (see \S\ref{ssec:KR}), so that
convergence in the norm \eqref{eq:KR} implies convergence in $\mathcal D'(\Omega)$.

\emph{Step 1: Nested nets.} For each  $m\in \mathbb N$ we build a finite Borel partition
$\mathcal P_m$ of $K$ satisfying the following properties:
\begin{enumerate}[label=\roman*)]
    \item for all $m\in \mathbb N$ the elements of $\mathcal P_m$ have  diameter at most $\delta_m$;
    \item  for all $m\in \mathbb N$ and all $Q\in \mathcal P_{m+1}$, there exists a unique $F(Q)\in \mathcal P_m$ containing $Q$ called \textit{parent of $Q$}.
\end{enumerate}
 This can be done as follows.  First for each $m\in \mathbb N$ construct a partition $ \mathcal P'_m$ by covering $K$ by finitely many balls $\{B_i\}_{i=1}^{N_m}$ of radius $\delta_m/2$ and then setting $ \mathcal P'_m\coloneqq \cup_{i=1}^{N_m} U_i$, where $U_i$ are  inductively defined by
 \[
 U_1\coloneqq B_1\cap K; \quad U_i\coloneqq (B_i\cap K)\setminus (U_1\cup \dots U_{i-1}), \quad i=2,\dots, N_m.
 \]
 Then we set $\mathcal P_1\coloneqq \mathcal P'_1$ and inductively 
 $$\mathcal P_m\coloneqq \bigcup_{{{Q\in \mathcal P'_m,\, R\in \mathcal P_{m-1}}},\, \,{Q\cap R\neq \emptyset} } Q\cap R, \quad m>1. $$
The families $\mathcal P_m$ have the properties  stated above.

For each $Q\in\mathcal P_m$ fix a ``center'' $x_Q\in Q$. Then, by \eqref{eq:equiv},
\begin{enumerate}[label=(\alph*)]
  \item\label{a1} $d_\Omega(x,x_Q)\le\delta_m$ for all $ x\in Q$,
  \item\label{b1}$d_\Omega(x_Q,x_{F(Q)})=|x_Q-x_{F(Q)}|\le\delta_{m-1}$,  provided $m\ge2$,
  \item\label{c1} $\llbracket x_Q,x_{F(Q)}\rrbracket\subset\Omega\cap\{\dist(\cdot,K)\le\delta_{m-1}\}$, provided $m\ge2$.
\end{enumerate}

\emph{Step 2: $m$-scale atomic approximations of $\mu$.} Set
\[
\mu_m\coloneqq\sum_{Q\in\mathcal P_m}\mu(Q)\,\delta_{x_Q},
\]
which has zero mass on  $\Omega$, which is necessary for $\fl{\mu_m} < \infty$. If $u$ is $1$-Lipschitz for $d_\Omega$, then 
\[
  \int u\,d(\mu-\mu_m)
  =\sum_{Q\in\mathcal P_m}\int_Q\bigl(u(x)-u(x_Q)\bigr)\,d\mu(x)
  \le\delta_m\,|\mu|(K)
\]
so that 
\[
\fl{\mu-\mu_m}\le\delta_m|\mu|(K)\longrightarrow 0.
\]
Moreover, since $\mathcal P_{m+1}$ refines $\mathcal P_m$, we have
$\mu_m=\sum_{Q\in\mathcal P_{m+1}}\mu(Q)\,\delta_{x_{F(Q)}}$ and hence
\begin{equation}\label{eq:telescope}
  \mu_{m+1}-\mu_m=\sum_{Q\in\mathcal P_{m+1}}\mu(Q)\bigl(\delta_{x_Q}-\delta_{x_{F(Q)}}\bigr).
\end{equation}
Moreover
\[
\sum_{Q\in\mathcal P_{m+1}}|\mu(Q)|\,d_\Omega(x_Q,x_{F(Q)})\le\delta_m\sum_{Q\in\mathcal P_{m+1}} |\mu|(Q)\le \delta_m |\mu|(K).
\]
Summing over $m$ and using
\eqref{eq:deltachoice},
\begin{equation}\label{eq:tail}
  \sum_{m\ge2}\ \sum_{Q\in\mathcal P_m}|\mu(Q)|\,d_\Omega(x_Q,x_{F(Q)})
  \ \le\ |\mu|(K)\sum_{m\ge1}\delta_m\ \le\ \frac\varepsilon4 .
\end{equation}
In particular the following series converges absolutely in
the KR norm 
\begin{equation}\label{eq:decomp}
  \mu=\mu_1+\sum_{m\ge2}\sum_{Q\in\mathcal P_m}\mu(Q)\bigl(\delta_{x_Q}-\delta_{x_{F(Q)}}\bigr) 
\end{equation}
and again by \eqref{eq:deltachoice}
\[
\fl{\mu_1}\le\fl\mu+\delta_1|\mu|(K)\le\fl\mu+\frac\varepsilon4.
\]

\emph{Step 3: Transport optimality in the first level.} The measure $\mu_1$ is supported on the
finite set $N_1\coloneqq\{x_Q:Q\in\mathcal P_1\}$ and has zero average. As we already discussed before, by McShane's formula, every $d_\Omega$-$1$-Lipschitz function on $N_1$ is the restriction of one on $\Omega$. Therefore, $\fl{\mu_1}$ coincides with the Kantorovich--Rubinstein norm
of $\mu_1$ on the metric space $(N_1,d_\Omega)$. Since $N_1$ is a finite metric space, the optimal transport plan between
$\mu_1^+$ and $\mu_1^-$ exists: there are pairs $x_j\ne y_j$ in $N_1$ and
$\lambda_j>0$, $j=1,\dots,N$, with
\begin{equation}\label{eq:mu1}
      \mu_1=\sum_{j=1}^N\lambda_j\bigl(\delta_{x_j}-\delta_{y_j}\bigr),
  \qquad
  \sum_{j=1}^N\lambda_j\,d_\Omega(x_j,y_j)=\fl{\mu_1} .
\end{equation}
By Remark~\ref{rem:polygonal} we may pick polygonal paths
$\gamma_j\subset\Omega$ from $x_j$ to $y_j$ with
$\ell(\gamma_j)\le d_\Omega(x_j,y_j)+2^{-j}\varepsilon/(4\lambda_j)$, so that 
\begin{equation}\label{eq:firstcost}
  \sum_{j=1}^N\lambda_j\,\ell(\gamma_j)\ \le\ \fl{\mu_1}+\frac\varepsilon4
  \ \le\ \fl\mu+\frac\varepsilon2 .
\end{equation}

\emph{Step 4: Conclusion.} Consider the sequence of quadruples $(a_i,b_i,\lambda_i,\gamma_i)_{i\in\mathbb N}$ obtained
by listing first the $N$ quadruples $(x_j,y_j,\lambda_j,\gamma_j)$ of Step~3 and then, step by step in $m\ge2$, the quadruples $\bigl(x_Q,x_{F(Q)},\mu(Q),\llbracket x_Q,x_{F(Q)}\rrbracket\bigr)$
for those $Q\in\mathcal P_m$ with $\mu(Q)\ne0$ and $x_Q\ne x_{F(Q)}$. Crucially, each level $m$ contributes
finitely many. Then, point \ref{it:i} follows from
\eqref{eq:decomp} and \eqref{eq:mu1}. Point \ref{it:ii} follows from \eqref{eq:firstcost},
\eqref{eq:tail} and the fact that, by \ref{b1}, the segment
$\llbracket x_Q,x_{F(Q)}\rrbracket$ has length $d_\Omega(x_Q,x_{F(Q)})$. Finally, to prove \ref{it:iii} let us write $L=K\cup\Gamma\cup S$
with
\[
\Gamma\coloneqq\bigcup_{j\le N}\gamma_j \quad \text{ and } \quad 
S\coloneqq\bigcup_{m\ge2}\bigcup_{Q\in\mathcal P_m}\llbracket x_Q,x_{F(Q)}\rrbracket.
\]
Clearly, both $\Gamma$
and $S$ are countable unions of segments, so $\vol(L)=\vol(K)$. 

On the other hand, $L$ is
bounded because $\Gamma$ is compact and $S$ is contained in the (bounded) enlargement $\{\dist(\cdot,K)\le\delta_1\}$
by \ref{c1}. From this and \ref{c1} again it follows $L \subset \Omega$. It remains
to show that $L$ is closed. Let $p_k\in L$ with $p_k\to p$. If infinitely
many $p_k$ lie in the closed set $K\cup\Gamma$, then $p\in K\cup\Gamma$.
Hence, we may assume $p_k\in S$ for all $k$. Each scale $m$ contributes of
finitely many segments to $S$. If the $p_k$ belong to only finitely many of them,
$p$ lies on one of these closed segments. If not, passing to a subsequence
we have $p_k\in\llbracket x_{Q_k},x_{F(Q_k)}\rrbracket$ with
$Q_k\in\mathcal P_{m_k}$ and $m_k\to\infty$. This requires
$\dist(p_k,K)\le\delta_{m_k-1}\to0$ by \ref{c1}, and therefore $p\in K$
($K$ is closed). Hence $L$ is compact.
\end{proof}

\subsection{Thickening a segment}\label{ssec:thicken}

A segment carries its mass on a set of zero volume, so the filling produced by
Proposition~\ref{prop:AE} is not yet a field. It is made into one by \emph{thickening}: each segment
is replaced by an average of two-legged paths through a small transverse disc, which leaves the
endpoints, and therefore the load, untouched {(Figure~\ref{fig:thicken})}.

\begin{figure}[t]
\centering
\begin{tikzpicture}[x=1cm, y=1cm, scale=1.15, >={Latex[length=2.2mm, width=1.8mm]}]
  \coordinate (a) at (-3.0,0);
  \coordinate (b) at ( 3.0,0);
  \coordinate (p) at (0,0);
  \coordinate (t) at (0,1.3);
  \coordinate (s) at (0,-1.3);
  \coordinate (y1) at (0.05,0.78);
  \coordinate (y2) at (0.20,-0.55);
  % the double cone: the support of the thickened segment
  \fill[gray!18] (a) -- (t) arc (90:270:0.38 and 1.3) -- cycle;
  \fill[gray!18] (b) -- (t) arc (90:-90:0.38 and 1.3) -- cycle;
  \draw[gray!70] (a) -- (t) (a) -- (s) (b) -- (t) (b) -- (s);
  % the transverse disc D_eps, seen in perspective
  \draw[thick] (p) ellipse (0.38 and 1.3);
  % the original segment
  \draw[dashed] (a) -- (b);
  % two-legged paths a -> y -> b, arrowheads in the middle of each leg
  \foreach \y in {y1, y2} {
    \draw[marine, thick] (a) -- (\y) -- (b);
    \draw[marine, thick, ->] (a) -- ($(a)!0.55!(\y)$);
    \draw[marine, thick, ->] (\y) -- ($(\y)!0.55!(b)$);
    \fill[marine] (\y) circle (1.3pt);
  }
  % labels
  \fill (a) circle (1.5pt) node[left] {$a$};
  \fill (b) circle (1.5pt) node[right] {$b$};
  \fill (p) circle (1.2pt) node[below right, inner sep=1pt] {\small $p$};
  \node[above] at (t) {$D_\varepsilon$};
  \node[marine, above left, inner sep=1.5pt] at (y1) {\small $y$};
  \node[below] at (0,-1.45) {$L = |b-a|$};
\end{tikzpicture}
\caption{(Thickening a segment). The field $\sigma^\varepsilon_{a,b}$ is a weighted superposition of the $1$-currents obtained concatenating paths from $a$ to $b$, passing through any $y \in D_{\varepsilon}$, and weighted by the normalized surface measure $d\mu_\varepsilon(y)$ of the transverse disc $D_{\varepsilon}$. The support of $\sigma^\varepsilon_{a,b}$ is the  shaded double cone of volume $O(\varepsilon^{d-1})$, whereas its boundary is given by $\Div\sigma^\varepsilon_{a,b} = \delta_a-\delta_b$. Intermediate boundary masses at $y$ cancel out by the concatenation procedure. (Image generated with the assistance of AI tools.)}
\label{fig:thicken}
\end{figure}

\begin{lemma}[Thickening]\label{lem:thicken}
Let $a\ne b$ in $\R^d$, let $L\coloneqq|b-a|$, let $p\coloneqq\frac{a+b}2$ and let $D_\varepsilon$
be the $(d-1)$-dimensional disc of radius $\varepsilon$ centred at $p$ and orthogonal to $b-a$, with
$\mu_\varepsilon$ its normalized surface measure. Then
\[
    \sigma^\varepsilon_{a,b} \ \coloneqq\ \int_{D_\varepsilon}
      \bigl(\sigma_{a,y}+\sigma_{y,b}\bigr)\,d\mu_\varepsilon(y)
\]
is an $L^1$ vector field, supported in the union of the two solid cones with apexes $a$, $b$ and
common base $D_\varepsilon$, and
\[
    \Div\sigma^\varepsilon_{a,b} = \delta_a-\delta_b ,
    \qquad \int\bigl|\sigma^\varepsilon_{a,b}\bigr| \ \le\ L+\frac{2\varepsilon^2}{L} ,
    \qquad \vol\bigl(\{\sigma^\varepsilon_{a,b}\ne0\}\bigr) \ \le\ \frac{{\omega_{d-1}}}{d}\,\varepsilon^{d-1}L ,
\]
where ${\omega_{d-1}}$ is the volume of the unit ball of $\R^{d-1}$.
\end{lemma}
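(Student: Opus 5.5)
The argument has three parts: the divergence (by linearity from \eqref{eq:segment}), the mass bound (a pointwise length computation), and the support and volume bound (elementary geometry of the two cones).

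\emph{Divergence.} The map $y\mapsto\sigma_{a,y}+\sigma_{y,b}$ is weakly-$*$ continuous into $\Meas(\R^d;\R^d)$ on $D_\varepsilon$. By \eqref{eq:segment}, $\Div(\sigma_{a,y}+\sigma_{y,b})=\delta_a-\delta_y+\delta_y-\delta_b=\delta_a-\delta_b$ for every $y$. Testing against $\nabla\varphi$ with $\varphi\in C^\infty_c(\R^d)$ and using Fubini, the average over the probability measure $\mu_\varepsilon$ gives $\Div\sigma^\varepsilon_{a,b}=\delta_a-\delta_b$. The intermediate masses $\delta_y$ cancel identically before integration, so nothing is lost.

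\emph{Mass.} The total variation is convex, hence
\[
|\sigma^\varepsilon_{a,b}|(\R^d)\le\int_{D_\varepsilon}\bigl(|y-a|+|b-y|\bigr)\,d\mu_\varepsilon(y).
\]
For $y\in D_\varepsilon$ write $y=p+z$ with $z\perp b-a$ and $|z|\le\varepsilon$. Then $|y-a|=|b-y|=\sqrt{L^2/4+|z|^2}\le L/2+|z|^2/L$, using $\sqrt{s^2+t}\le s+t/(2s)$ with $s=L/2$. Summing the two legs gives $L+2|z|^2/L\le L+2\varepsilon^2/L$.

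\emph{Absolute continuity.} The measure $\sigma^\varepsilon_{a,b}$ is $L^1$, which I get by parametrizing. Points of the leg $\llbracket a,y\rrbracket$ are $x=a+t(y-a)$ with $t\in[0,1]$. The map $(t,y)\mapsto a+t(y-a)$ from $(0,1]\times D_\varepsilon$ onto the solid cone $C_a$ with apex $a$ and base $D_\varepsilon$ is a diffeomorphism away from $t=0$, with Jacobian of size $t^{d-1}\cdot L/2$ up to constants. Pushing the product measure $dt\otimes d\mu_\varepsilon$ forward through it therefore yields an absolutely continuous measure with density of order $t^{1-d}$. Since the legs are weighted by $\mathcal H^1$ (a factor $|y-a|\,dt$), the density is comparable to $\varepsilon^{1-d}\,(L/2)^{-1}\,t^{1-d}\cdot L$, i.e.\ to $\varepsilon^{1-d}|x-a|^{1-d}L^{d-1}$ up to constants. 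This blows up only like $|x-a|^{1-d}$, which is integrable in $\R^d$. The total mass is in any case already bounded by the previous step, so finiteness is clear; the point of this computation is only to confirm absolute continuity. The same holds for the legs from $y$ to $b$, which fill the cone $C_b$. This Jacobian computation is the main technical step, but it is routine.

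\emph{Support and volume.} Every leg lies in $C_a\cup C_b$, so the field vanishes outside this set. A cone of height $h$ over a $(d-1)$-disc of radius $\varepsilon$ has volume $\omega_{d-1}\varepsilon^{d-1}h/d$. Here each cone has height $L/2$, so
\[
\vol(C_a\cup C_b)=2\cdot\frac{\omega_{d-1}\varepsilon^{d-1}}{d}\cdot\frac{L}{2}=\frac{\omega_{d-1}}{d}\,\varepsilon^{d-1}L,
\]
which is the stated bound on $\vol(\{\sigma^\varepsilon_{a,b}\ne0\})$.
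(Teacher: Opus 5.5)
Your proposal is correct and follows the paper's proof essentially step for step. The divergence comes from \eqref{eq:segment}, with the intermediate masses at $y$ cancelling. The mass bound comes from $|y-a|=|b-y|=\sqrt{L^2/4+|y-p|^2}$, and the volume is that of two cones of height $L/2$. Absolute continuity comes from the same parametrization $\Phi(s,y)=a+s(y-a)$ with Jacobian $\frac L2 s^{d-1}$, which gives a density bounded by $C|x-a|^{1-d}$.

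Your ``comparable up to constants'' description of that density is a bit loose; the exact density is $\frac{2|y-a|^{d}}{\omega_{d-1}\varepsilon^{d-1}L}|x-a|^{1-d}$. This does not matter, since only the upper bound is needed.
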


\begin{proof}
By \eqref{eq:segment}, $\Div\int\sigma_{a,y}\,d\mu_\varepsilon(y) = \delta_a-\mu_\varepsilon$ and
$\Div\int\sigma_{y,b}\,d\mu_\varepsilon(y) = \mu_\varepsilon-\delta_b$; adding the two gives the
first identity. For the mass, the triangle inequality gives
\begin{align*}
    |\sigma^\varepsilon_{a,b}|(\R^d) 
    \ &\le\ \int_{D_\varepsilon}\bigl(|y-a|+|b-y|\bigr)\,d\mu_\varepsilon(y) \\
    \ & =\ \int_{D_\varepsilon}2\sqrt{\tfrac{L^2}4+|y-p|^2}\ d\mu_\varepsilon(y)
    \ \le\ \sqrt{L^2+4\varepsilon^2} \ \le\ L+\frac{2\varepsilon^2}{L} .
\end{align*}
The support is contained in the two cones, whose total combined volume is
$\frac1d\,{\omega_{d-1}}\varepsilon^{d-1}\cdot L$.

It remains to see that $\sigma^\varepsilon_{a,b}\ll\Leb^d$ with an $L^1$ density. Consider the cone
with apex $a$ and parametrize it by $\Phi(s,y)\coloneqq a+s(y-a)$, $(s,y)\in(0,1)\times D_\varepsilon$. Let $n$ be a unit normal of $D_\varepsilon$ and $e_1,\dots,e_{d-1}$ an orthonormal basis of its supporting
hyperplane. Then $\partial_s\Phi = y-a$ and $\partial_{e_i}\Phi = s\,e_i$, so $D\Phi$ has the columns
$y-a,\ se_1,\dots,se_{d-1}$ and
\begin{align*}
    J\Phi(s,y) \ &= \ \bigl|\det[\,y-a\mid se_1\mid\cdots\mid se_{d-1}\,]\bigr| \\
     &=  \ \bigl|\det[ se_1\mid\cdots\mid se_{d-1}\mid y-a ]\bigr| \ = \ s^{d-1}\,|(y-a)\cdot n| \ = \ \tfrac L2\,s^{d-1} \ > \ 0 ,
\end{align*}
having used that $\det(A)=\Pi_i A_{i,i}$ for a triangular matrix $A\in\R^{d\times d}$. Hence $\Phi$ is a
diffeomorphism onto the open cone minus its apex. {By direct computation
$\int_{D_\varepsilon}\sigma_{a,y}\,d\mu_\varepsilon(y) = \Phi_{\#}(\tau_y\,\rho)$ with
$\tau_y = (y-a)/|y-a|$ and $\rho\coloneqq|y-a|\,ds\otimes\mu_\varepsilon$ on $(0,1)\times D_\varepsilon$,
so its total variation is at most $\Phi_{\#}\rho$. In the coordinates
$(s,z)\in(0,1)\times\{|z|<\varepsilon\}$, $y = p+\sum_iz_ie_i$, one has
$\mu_\varepsilon = (\omega_{d-1}\varepsilon^{d-1})^{-1}\,dz$, hence
$\rho = \frac{|y-a|}{\omega_{d-1}\varepsilon^{d-1}}\,ds\,dz$, and the change of variables
$x = \Phi(s,y)$, $dx = J\Phi\,ds\,dz$, gives for every Borel set $E\subset\R^d$
\[
    \Phi_{\#}\rho(E) \ = \ \int_{\Phi^{-1}(E)}\frac{|y-a|}{\omega_{d-1}\varepsilon^{d-1}}\,ds\,dz
    \ = \ \int_E\frac{|y-a|}{\omega_{d-1}\varepsilon^{d-1}\,J\Phi(s,y)}\,dx ,
    \qquad (s,y) = \Phi^{-1}(x) .
\]
Thus $\Phi_{\#}\rho$ has, at $x = \Phi(s,y)$, the density
\[
    \frac{|y-a|}{\omega_{d-1}\varepsilon^{d-1}}\,\frac{1}{J\Phi(s,y)}
    \ = \ \frac{2\,|y-a|}{\omega_{d-1}\varepsilon^{d-1}L}\,s^{1-d}
    \ = \ \frac{2\,|y-a|^{d}}{\omega_{d-1}\varepsilon^{d-1}L}\,|x-a|^{1-d}
    \ \le\ C\,|x-a|^{1-d} ,
\]
because $s = |x-a|/|y-a|$ and $|y-a|\le\sqrt{L^2/4+\varepsilon^2}$; and $|x-a|^{1-d}$ is locally
integrable, $\int_{B_r(a)}|x-a|^{1-d}\,dx = d\,\omega_d\,r$.} The same applies at $b$.
\end{proof}
We are finally ready to prove our main technical filling.
\begin{proof}[Proof of Lemma~\ref{lem:filling}]
Set $f = \Div v$, so that $\fl f\le\int_\Omega|v|<\infty$ by \eqref{eq:apriori}. Apply
Proposition~\ref{prop:AE} with $\varepsilon = \eta/4$: there exist a compact $K\subset\Omega$, points
$a_i,b_i\in K$ and weights ${\lambda_i}>0$ with 
\[
    f \ = \ \sum_i{\lambda_i}\bigl(\delta_{a_i}-\delta_{b_i}\bigr),
    \qquad \sum_i{\lambda_i}\,d_\Omega(a_i,b_i) \ \le\ \fl f+\tfrac\eta4 ,
\]
the indices with $a_i = b_i$ being discarded. By Remark~\ref{rem:polygonal} we may choose, for each $i$, a polygonal path $\gamma_i \subset \Omega$ from $a_i$ to $b_i$ with 
\[
    \operatorname{length}(\gamma_i) \ \le\  d_\Omega(a_i,b_i) \  + \ 2^{-i}\frac{\eta}{4{\lambda_i}} ,
\]
and we write its segments as $\llbracket p^i_{j-1},p^i_j\rrbracket$, $j = 1,\dots,N_i$, with
$p^i_0 = a_i$ and $p^i_{N_i} = b_i$. Each $\gamma_i$ is a compact subset of $\Omega$, so
$\varrho_i\coloneqq\dist(\gamma_i,\partial \Omega)>0$. Thicken its segments: choose radii
$\varepsilon^i_j\in(0, \varrho_i)$ so small that, writing $L^i_j\coloneqq|p^i_j-p^i_{j-1}|$, we have
\[
    {\lambda_i}\sum_{j=1}^{N_i}\frac{2(\varepsilon^i_j)^2}{L^i_j} \ \le\ 2^{-i}\frac\eta4
    \qquad\text{and}\qquad
    \sum_{j=1}^{N_i}\frac{{\omega_{d-1}}}{d}\,(\varepsilon^i_j)^{d-1}L^i_j \ \le\ 2^{-i}\eta ,
\]
and set
\[
    {w} \ \coloneqq\ \sum_i{\lambda_i}\sum_{j=1}^{N_i}\sigma^{\varepsilon^i_j}_{p^i_{j-1},p^i_j},
\]
where $\sigma^{\varepsilon}_{a,b}$ is precisely given by Lemma \ref{lem:thicken}, invoked for each triple  $\varepsilon^i_j, a=p^i_{j-1} , b=p^i_j $.
Every summand is supported within distance $\varepsilon^i_j< \varrho_i$ of $\gamma_i$, hence $w$ is supported in $\Omega$. By Lemma~\ref{lem:thicken} the series converges absolutely in $L^1$, with
\begin{align*}
    \int_\Omega|{w}|
    \ &\le\ \sum_i{\lambda_i}\sum_{j=1}^{N_i}\Bigl(L^i_j+\frac{2(\varepsilon^i_j)^2}{L^i_j}\Bigr)
    \ \le\ \sum_i{\lambda_i}\operatorname{length}(\gamma_i)+\frac\eta4\\
    \ &\le\ \fl f+\frac\eta4+\frac\eta4 \ \le\ \fl f+\eta ,
\end{align*}
and the volume of $\{w\neq 0\}$ is at most $\sum_i2^{-i}\eta\le\eta$. Finally $\Div$ is continuous
along $L^1$-convergent series and the divergences telescope along each path, so
\[
    \Div {w} \ = \ \sum_i{\lambda_i}\sum_{j=1}^{N_i}\bigl(\delta_{p^i_{j-1}}-\delta_{p^i_j}\bigr)
    \ = \ \sum_i{\lambda_i}\bigl(\delta_{a_i}-\delta_{b_i}\bigr) \ = \ f
\]
in $\mathcal D'(\Omega)$.

Finally, assume in addition that $f$ is a measure supported on a compact negligible set
$K\subset\Omega$. Since $\fl f<\infty$, Lemma~\ref{lem:refine} applies with $\varepsilon=\eta/4$
and yields $a_i,b_i\in K$, $\lambda_i\in\R\setminus\{0\}$ and polygonal paths
$\gamma_i\subset\Omega$ from $a_i$ to $b_i$ with
\[
    f=\sum_i\lambda_i(\delta_{a_i}-\delta_{b_i}),\qquad
    \sum_i|\lambda_i|\,\ell(\gamma_i)\le\fl f+\tfrac\eta4,
\]
and with $\Sigma\coloneqq K\cup\bigcup_i\gamma_i$ compact and negligible. Interchanging $a_i$
with $b_i$ and reversing $\gamma_i$ we may assume $\lambda_i>0$. Since
$\vol(\{\dist(\cdot,\Sigma)\le s\})\downarrow\vol(\Sigma)=0$ as $s\downarrow0$, we may pick
$s\in(0,\dist(\Sigma,\Omega^c))$ such that the open set $U\coloneqq\{\dist(\cdot,\Sigma)<s\}$
satisfies $U\subset\Omega$ and $\vol(\overline U)\le\eta$. Repeating the construction
above with these paths, and imposing on the radii the additional constraint
$\varepsilon^i_j<\dist(\gamma_i,U^c)$, we obtain $w\in L^1(\Omega;\R^d)$ with $\Div w=f$ and
$\int_\Omega|w|\le\fl f+\eta$, all of whose summands are supported in $U$. Hence
$\spt w\subset\overline U$ and $\vol(\spt w)\le\eta$.
\end{proof}

\begin{proof}[Proof of Lemma~\ref{lem:modification}]
Let $A_n$ and $t(n)$ be as in Proposition~\ref{prop:truncation}. By Lemma~\ref{lem:bump} below, there are open sets $\Omega_i\Subset\Omega$ with smooth boundary
and $\vol(\Omega\setminus\Omega_i)\le i^{-1}$. Choose $i(n)\to\infty$ sufficiently fast so  that $\vol(\Omega\setminus\Omega_{i(n)})\le t(n)^{-1}$. Enlarging
the truncation set to $ A_n'\coloneqq A_n\cup(\Omega\setminus\Omega_{i(n)})$ we have
$\vol( A_n')\le(C+1)t(n)^{-1}\to0$, and
\[
    S_n\coloneqq V_n\mathds 1_{ A_n'},\qquad E_n\coloneqq V_n-S_n
\]
satisfy $\|E_n\|_{L^1}\le t(n)^{-1}$, with $E_n$ \emph{compactly supported} in $\Omega$. Since
$\Div V_n = 0$, the load shed by the truncation is
\[
    f_n \ \coloneqq\ \Div S_n \ = \ -\Div E_n ,
\]
an $m$-tuple of loads, one for each row, each of the form treated in Lemma~\ref{lem:filling}. 

We also observe that $f_n$ is a measure carried (and supported) by a compact negligible subset of
$\Omega$. Indeed, $B_n\coloneqq\Omega_{i(n)}\setminus A_n$ satisfies
$\overline{B_n}\subset\overline{\Omega_{i(n)}}\Subset\Omega$ and has finite perimeter,
being the difference of a set with smooth boundary and a set of locally finite perimeter
in $\Omega$ (Proposition~\ref{prop:truncation}). Since $V_n\in C^1$ and $\Div V_n=0$,
the Leibniz rule and De Giorgi's structure theorem (\cite{AmbrosioFuscoPallara00}) give
\[
    f_n=-\Div E_n=-\Div(V_n\mathds 1_{B_n})=-V_n\,D\mathds 1_{B_n}
    =(V_n\,\nu_{B_n})\,\Haus^{d-1}\restr\partial^*B_n ,
\]
where $\nu_{B_n}$ is the outer unit normal on the reduced boundary $\partial^*B_n$.
Hence each row of $f_n$ is a finite measure supported in
$K_n\coloneqq\partial B_n\subset\partial\Omega_{i(n)}\cup\partial_\Omega A_n$. Moreover $K_n$ is compact because $\partial B_n\subset\overline{\Omega_{i(n)}}$. It is also negligible, because
$\partial\Omega_{i(n)}$ is a compact smooth hypersurface and $\vol(\partial_\Omega A_n)=0$.

In particular each $f_n$ is a measure  satisfying the additional assumptions of the last conclusion in Lemma~\ref{lem:filling}. Applying Lemma~\ref{lem:filling} row by row with $\eta = t(n)^{-1}$, and using
\eqref{eq:apriori}, produces $P_n\in L^1(\Omega;\X)$ 
\[
    \Div P_n = f_n, \qquad \|P_n\|_{L^1}\le\sqrt m\,\|E_n\|_{L^1}+m\,t(n)^{-1}, \qquad
    \vol\bigl(\spt P_n\bigr)\le m\,t(n)^{-1} .
\]
Set $U_n\coloneqq S_n-P_n$. Then $\Div U_n = f_n-f_n = 0$ in $\mathcal D'(\Omega)$,
\[
    \|V_n-U_n\|_{L^1}\ \le\ \|E_n\|_{L^1}+\|P_n\|_{L^1}\ \le\ (1+\sqrt m+m)\,t(n)^{-1}\ \longrightarrow\ 0 ,
\]
and $\spt U_n\subset A_n'\cup \spt P_n$ has volume at most $(C+1+m)t(n)^{-1}\to0$. This settles the first conclusion with the Borel set $A_n'' \coloneqq A_n'\cup \spt P_n$.

Finally, we prove the last conclusion. The set $A_n''$ is relatively closed in $\Omega$:
$A_n$ is (Proposition~\ref{prop:truncation}), $\Omega\setminus\Omega_{i(n)}$ is, and
$\spt P_n$ is by definition. Hence $\Omega\setminus A_n'' \Subset \Omega$ is a bounded open
set, and Lemma~\ref{lem:bump} applied to $\Omega\setminus A_n''$ with $\eta=t(n)^{-1}$ provides an open set
$O_n\Subset \Omega\setminus A_n''$ with smooth boundary and $\vol(O_n)\ge\vol(\Omega\setminus A_n'')-t(n)^{-1}$. Consider the open subset 
\[
    \tilde A_n\coloneqq\Omega\setminus\overline{O_n} \qquad \Longrightarrow \qquad \partial \Omega \subset \partial\tilde A_n,\quad \partial_\Omega \tilde A_n = \partial O_n \Subset \Omega.
\]
This shows that $\partial_\Omega  \tilde A_n$ is a compact smooth hypersurface (cf.\ Lemma~\ref{lem:bump}). Moreover, 
\[
   \overline{O_n}\subset \Omega \setminus A_n'' \qquad \Longrightarrow \qquad \spt U_n \subset A_n'' \subset \tilde A_n.
\]
Because $\vol(\partial O_n)=0$, we further get
\begin{align*}
     \vol(\tilde A_n)=\vol(\Omega)-\vol(O_n)& \le\vol(\Omega)-\vol(\Omega \setminus A_n'')+t(n)^{-1} \\
     & =\vol(A_n'')+t(n)^{-1}\le(C+2+m)\,t(n)^{-1}\longrightarrow0 .
\end{align*}
Thus $\tilde A_n$ has all the required properties.
\end{proof}
For the following lemma, we refer e.g.\ \cite[Proposition 8.2.1]{Daners08}.
\begin{lemma}\label{lem:bump}
    Let $\Omega\subset \R^d$ be a bounded and open set. Then, for every $\eta>0$, there is $\delta>0$ and an open set $O_\delta$ with smooth boundary such that $O_\delta \Subset \Omega$ and
    \[
        \dist(O_\delta, \Omega^c)\ge \frac{\delta}{2} ,\qquad \vol(O_\delta)\ge \vol(\Omega) -\eta.
    \]
\end{lemma}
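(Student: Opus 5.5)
The plan is to approximate $\Omega$ from inside by a superlevel set of a mollified indicator function, and to use Sard's theorem to pick a regular level, which makes the boundary smooth.

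\emph{Step 1: choosing $\delta$.} For $s>0$ set $\Omega_s\coloneqq\{x\in\Omega:\dist(x,\Omega^c)>s\}$. These sets are open and increase to $\Omega$ as $s\downarrow0$, so $\vol(\Omega_s)\uparrow\vol(\Omega)$ by monotone convergence. I fix $\delta>0$ with
\[
\vol(\Omega_{5\delta/4})\ \ge\ \vol(\Omega)-\eta .
\]

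\emph{Step 2: the smooth function.} Let $\rho_{r}$ be a standard mollifier supported in $B_{r}$, and set $\varphi\coloneqq\mathds 1_{\Omega_\delta}*\rho_{\delta/4}$. Then $\varphi\in C^\infty_c(\R^d)$, $0\le\varphi\le1$, and $\varphi$ has two key properties:
\begin{itemize}
\item $\varphi>0$ only at points within distance $\delta/4$ of $\Omega_\delta$;
\item $\varphi=1$ at every $x$ with $B_{\delta/4}(x)\subset\Omega_\delta$, and in particular on $\Omega_{5\delta/4}$.
\end{itemize}
The second property uses the triangle inequality: if $\dist(x,\Omega^c)>5\delta/4$ and $|y-x|<\delta/4$, then $\dist(y,\Omega^c)>\delta$, so $y\in\Omega_\delta$.

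\emph{Step 3: the regular level.} By Sard's theorem, almost every $t\in(0,1)$ is a regular value of $\varphi$. I fix such a $t$ and set $O_\delta\coloneqq\{\varphi>t\}$. Since $\nabla\varphi\neq0$ on $\{\varphi=t\}$, the implicit function theorem shows that $\partial O_\delta=\{\varphi=t\}$, and this set is a compact smooth hypersurface, compact because $\varphi$ has compact support.

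\emph{Step 4: the required properties.} These follow from the two properties in Step 2.
\begin{itemize}
\item Distance: every $x\in O_\delta$ lies within $\delta/4$ of $\Omega_\delta$. Hence $\dist(x,\Omega^c)\ge\delta-\delta/4=3\delta/4$, so $\dist(O_\delta,\Omega^c)\ge3\delta/4\ge\delta/2$.
\item Compact inclusion: $\overline{O_\delta}$ is bounded and stays at distance at least $3\delta/4$ from $\Omega^c$, so $O_\delta\Subset\Omega$.
\item Volume: $O_\delta\supset\Omega_{5\delta/4}$, so $\vol(O_\delta)\ge\vol(\Omega)-\eta$ by the choice of $\delta$ in Step 1.
\end{itemize}

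The only genuinely delicate point is smoothness of the boundary. Taking $\Omega_\delta$ itself would give a boundary with no regularity at all, and this is exactly why one mollifies and then invokes Sard's theorem to choose a regular value $t$. Everything else is the monotone convergence in Step 1 and triangle-inequality bookkeeping with the mollification radius, which is why that radius is taken to be $\delta/4$.
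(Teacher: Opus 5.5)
Your proof is correct, and every step checks out. In particular, the triangle-inequality bookkeeping for $\varphi=1$ on $\Omega_{5\delta/4}$ and for $\dist(O_\delta,\Omega^c)\ge 3\delta/4$ holds, as does the identification $\partial O_\delta=\{\varphi=t\}$ at a regular value $t\in(0,1)$.

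The paper gives no argument of its own and only cites \cite[Proposition 8.2.1]{Daners08}. Your construction, a smooth cutoff equal to $1$ on an inner parallel set and supported in a slightly larger one, followed by a superlevel set at a regular value given by Sard's theorem, is the standard way to prove this fact. It is therefore a complete, self-contained replacement for that citation.
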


%=====================================================================
\section{What changes for symmetric fields}\label{sec:symmetric}
%=====================================================================
Bouchitté's conjecture was originally stated for \emph{stress} fields, $\Omega\to\R^{d\times
d}_{\mathrm{sym}}$ with $\Div\Sigma = 0$; the non-symmetric formulation used here is the geometric
one proposed by Alberti \cite{Alberti21}. The passage between them is not a formality: our argument
of \S\ref{sec:filling} does not survive.

The step with no symmetric analogue is the thickening of Lemma~\ref{lem:thicken}. The reason lies in what the divergence operator sees, and is related to the distinction between the boundary operator for currents (oriented surfaces) and the first variation operator on varifolds (non-oriented surfaces).

Without the symmetric condition, the divergence of a matrix field reduces to the study of each of its rows. By Smirnov's theorem, it thus suffices to study the divergence associated with an oriented Lipschitz curve $\gamma : [0,1] \to \R^d$ with positive unit tangent orientation $\tau : \mathrm{im} \, \gamma \to S^{d-1}$. The area formula and the fundamental theorem of calculus give
\begin{align*}
    -\Div (\tau \, \mathcal H^1 \llcorner \mathrm{im} \, \gamma)(\varphi) & = \int_{\mathrm{im} \, \gamma} \partial_{\tau} \varphi \, d\mathcal H^1  \\
    & = \int_0^1 \frac{d}{ds} \varphi(\gamma) \, ds = \varphi(b) - \varphi(a)
\end{align*}
where $a = \gamma(0)$ and $b = \gamma(1)$. In particular, the distributional divergence of $- \tau \, \mathcal H^1 \llcorner \mathrm{im} \, \gamma$ is precisely the dipole measure $\delta_b - \delta_a$. The curvature or the discontinuities of the orientation of such curves are not accounted for by the divergence operator. 

If we now consider the non-oriented curve $\tau \otimes \tau \, \mathcal H^1  \llcorner \mathrm{im} \, \gamma$, then the divergence acts on the tensor with $(i,j)$ entries $\tau^i\tau^j$. This results in the appearance of a second derivative of $\gamma$, which due to the Leibniz rule must account for a (generalized) curvature term ``$\kappa(\gamma) = \frac{d}{dt} \tau(\gamma)$'':
\begin{align*}
     -\Div (\tau \otimes \tau \, \mathcal H^1 \llcorner \mathrm{im} \, \gamma)(\psi) & = \int_{\mathrm{im} \, \gamma} \partial_{\tau} \psi \cdot \tau \, d\mathcal H^1  \\
     & = \int_0^1 \frac{d}{ds}[\psi(\gamma) \cdot \tau(\gamma)] \, ds  - \int_0^1 \psi(\gamma) \cdot \frac{d}{ds} \tau(\gamma) \, ds  \\
     & = [\psi(b)\cdot\tau(b) - \psi(a) \cdot\tau(a)]  - \kappa(\psi) .
\end{align*}
Thus  the divergence of the non-oriented curve $- (\tau \otimes \tau \, \mathcal H^1 \llcorner \mathrm{im} \, \gamma)$ consists of the endpoint atomic vector measures $\tau(b) \, \delta_b - \tau(a) \, \delta_a$ and the distributional curvature which for a sufficiently regular curve has the form
\[
    \kappa = \vec H \ \mathcal H^1 \llcorner \mathrm{im} \, \gamma  \ + \ \sum_{y \in \Delta_\gamma} [\tau^+ - \tau^-] \delta_y 
\]
where $\vec H$ is the approximate curvature vector of $\gamma$ and $\Delta_\gamma$ is the set of points $y$ where $\gamma$ has a corner and the tangent jumps from $\tau^-$ to $\tau^+$. Divergence-free symmetric fields carried by curves are
\emph{stationary} varifolds, and these are rigid: by Allard--Almgren \cite{AllardAlmgren76}, a
stationary one-dimensional varifold with density bounded away from zero is locally a finite union of
straight segments meeting at balanced junctions (the sum of their associated co-normals must equate to zero, a situation that cannot occur for injective curves). This means that shape matters for the symmetric case. Corners in particular possess concentrated
curvature. Rounding it does
not help either, as one then must pay approximate curvature. One might try to fatten a segment by an absolutely continuous superposition of
curves with the same endpoints, but as soon as a curve is not a bar the curvature term $\kappa$ appears breaking the PDE.

\medskip
\noindent\textbf{Acknowledgments.} All authors acknowledges support by the European Union (ERC, ConFine, 101078057) and the MIUR Excellence Department Project awarded to the Department of Mathematics, University of Pisa, CUP I57G22000700001. F.N. also acknowledges support by the INdAM-GNAMPA Project ``Analisi e Gamma-convergenza per alcuni funzionali non locali'' (CUP E53C25002010001\#).

%=====================================================================
\begingroup
\footnotesize
\setlength{\parskip}{0pt}

\endgroup

\medskip
\begingroup\footnotesize{
\noindent\textsc{(A.~Arroyo-Rabasa, I.~Y.~Violo) Dipartimento di Matematica, Università di Pisa,
Largo Bruno Pontecorvo 5, 56127 Pisa, Italy}\\
\noindent\textit{Email addresses}: \texttt{adolfo.rabasa@unipi.it}, \texttt{ivanyuri.violo@dm.unipi.it}

\medskip
\noindent\textsc{(F.~Nobili) Dipartimento di
Matematica e Applicazioni “Renato Caccioppoli”, Università di Napoli Federico II, Via
Cintia, Monte S.~Angelo, 80126 Napoli, Italy}\\
\noindent\textit{Email address}: \texttt{francesco.nobili@unina.it}}
\endgroup

\end{document}